\newcommand{\bpartial}{\bar{\partial}}
\newcommand{\p}{\partial}
\newcommand{\ol}{\overline}
\newcommand{\ul}{\underline}
\newtheorem{theorem}{Theorem}[section]
\newtheorem{lemma}[theorem]{Lemma}
\newtheorem{corollary}[theorem]{Corollary}
 \theoremstyle{definition}
\newtheorem{definition}[theorem]{Definition}
\theoremstyle{remark}
\numberwithin{equation}{section}
\begin{document}
\setlength{\baselineskip}{1.2\baselineskip}

\title[The parabolic flows for complex quotient equations]
{The parabolic flows for complex quotient equations}
\thanks{The author is supported by National Natural Science Foundation of China (No. 11501119).}

\author{Wei Sun}

\address{Department of Mathematics, Shanghai University, Shanghai 200444, China}
\email{sunweimath@shu.edu.cn}

\begin{abstract}
We apply the parabolic flow method to solving complex quotient equations on closed K\"ahler manifolds. We study the parabolic equation and prove the convergence. As a result, we solve the complex quotient equations.
\end{abstract}

\maketitle

\section{Introduction}
\label{gjf-int}

Let $(M,\omega)$ be a closed K\"ahler manifold of complex dimension $n \geq 2$, and $\chi$ a smooth closed real $(1,1)$ form in $\Gamma^k_\omega$, where $\Gamma^k_\omega$ is the set of all the real $(1, 1)$ forms whose eigenvalue sets with respect to $\omega$ belong to $k$-positive cone in $\mathbb{R}^n$.  In any local coordinate chart, we write
\begin{equation*}
	\chi = \frac{\sqrt{-1}}{2} \sum_{i,j} \chi_{i\bar j} d z^i \wedge d \bar z^j \qquad \text{and} \qquad \omega = \frac{\sqrt{-1}}{2} \sum_{i,j} g_{i\bar j} d z^i \wedge d \bar z^j .
\end{equation*}
In this paper, we study the following form of parabolic equations, for $ n \geq k > l \geq 0$
\begin{equation}
\label{pfcqe-int-flow-equation}
	\frac{\p u}{\p t} = \log \frac{\chi^k_u \wedge \omega^{n - k}}{\chi^l_u \wedge \omega^{n - l}} - \log \psi,
\end{equation}
with initial condition $u(x,0) = 0$, where $\psi \in C^{\infty} (M)$ is positive and $\chi_u $ is the abbreviation for $\chi + \frac{\sqrt{- 1}}{2} \p\bpartial u $.

The study of the parabolic flows is motivated by complex equations
\begin{equation}
\label{elliptic-equation}
	\chi^k_u \wedge \omega^{n - k} = \psi \chi^l_u \wedge \omega^{n - l}, \qquad \chi_u \in \Gamma^k_\omega .
\end{equation}
When $\psi$ is constant, it must be $c$ defined by
\begin{equation}
\label{gjf-int-condition}
	 c := \frac{\int_M \chi^k \wedge \omega^{n - k}}{\int_M \chi^l \wedge \omega^{n - l}}.
\end{equation}
These equations include some important geometric equations, which have attracted much attention in mathematics and physics since the breakthrough of Yau\cite{Yau78} (see also Aubin\cite{Aubin78}) in Calabi conjecture\cite{Calabi56}. 
The most famous examples are probably the complex Monge-Amp\`ere equation and Donaldson equation\cite{Donaldson99a}, which respectively correspond to
\begin{equation}
	\chi^n_u  = \psi \omega^{n }, \qquad \chi_u \in \Gamma^n_\omega .
\end{equation}
and 
\begin{equation}
	\chi^n_u  = \frac{\int_M \chi^n }{\int_M \chi \wedge \omega^{n - 1}} \chi_u \wedge \omega^{n - 1}, \qquad \chi_u \in \Gamma^n_\omega .
\end{equation} 
Since equation~\eqref{elliptic-equation} is fully nonlinear elliptic, a classical way to solve it is the continuity method. Using this method, the complex Monge-Amp\`ere equation was solved by Yau\cite{Yau78}, while Donaldson equation was independently solved by Li, Shi and Yao\cite{LiShiYao15}, Collins and Sz\`ekelyhidi\cite{CollinsSzekelyhidi14} and the author\cite{Sun2013e}. There have been many extensive studies for equation~\eqref{elliptic-equation} on closed complex manifolds, for example, \cite{Cherrier87,DK,Hou09,HouMaWu10,TWWvY2014,TWv10a,TWv10b,ZhangDK15,Zhang10}.

Besides the continuity method, the parabolic flow method also have the potential to solve complex quotient equations. The result of Yau\cite{Yau78} was reproduced by Cao\cite{Cao85} through K\"ahler-Ricci flow. Donaldson equation  was actually first solved  via the $J$-flow by Song and Weinkove\cite{SW08}. 
There are many results regarding different flows on closed complex manifolds, and we refer readers to \cite{CollinsSzekelyhidi14,FLM11,Gill11,LejmiSzekelyhidi15,Sun2013p,TWv11,Weinkove04,Weinkove06}.
In this paper, we preliminarily explore the parabolic flow equation~\eqref{pfcqe-int-flow-equation} for complex quotient equations,  and reprove the solvability of the corresponding complex quotient equations (see \cite{Szekelyhidi2014b,Sun2014e}).

To solve equation~\eqref{pfcqe-int-flow-equation}, some extra condition is in need. We assume that there is a real-valued $C^2$ function $\underline u$ satisfying $\chi_{\underline u} \in \Gamma^k_\omega$ and 
\begin{equation*}
	k \chi_{\underline u}^{k - 1} \wedge \omega^{n - k} > l \psi\chi_{\underline u}^{l - 1} \wedge \omega^{n - l} .
\end{equation*} 
For convenience, we adopt an equivalent definition of $\underline u$ due to  Sz\'ekelyhidi\cite{Szekelyhidi2014b},  which is called $\mathcal{C}$-subsolution.
\begin{definition}
\label{definition-subsolution}
We say that a $C^2$ function $\ul u$ is a $\mathcal{C}$-subsolution to \eqref{pfcqe-int-flow-equation} if $\chi_{\ul u} \in \Gamma^k_\omega$,  and at each point $x \in M$, the set
\begin{equation}
	\Big\{\tilde \chi \in \Gamma^k_\omega \,\Big|\,  \tilde \chi^k \wedge \omega^{n - k} \leq \psi \tilde \chi^l \wedge \omega^{n - l} \text{ and } \tilde \chi - \chi_{\ul u} \geq 0 \Big\}
\end{equation}
is bounded.
\end{definition}
When $l = 0$, a natural $\mathcal{C}$-subsolution is $\ul u \equiv  0$. In other cases, the existence of a $\mathcal{C}$-subsolution is not easy to verify in applications. In the case $\psi \equiv  c$, Sz\'ekelyhidi\cite{Szekelyhidi2014b} propose a conjecture that  the existence of a $\mathcal{C}$-subsolution is equivalent to that for all $m$ dimensional subvariety $V \subset M$, where $m = n - l, \cdots, n - 1$, 
\begin{equation}
\label{conjecture-criterion}
	\frac{k!}{(m - n + k)!} \int_V \chi^{m - n + k} \wedge \omega^{n - k} -  \frac{l! c}{(m - n + l)!} \int_V \chi^{m - n + l} \wedge \omega^{n - l} > 0 .
\end{equation}
For Donaldson equation,  the conjecture was verified in dimensional $2$ by Lejmi and Sz\`ekelyhidi\cite{LejmiSzekelyhidi15} and on toric manifolds by Collins and Sz\`ekelyhidi\cite{CollinsSzekelyhidi14}. 
In view of these results, we expect the conjecture to hold in general cases.

The following is the main result of the paper.
\begin{theorem}
\label{main-theorem}
Let $(M^n,\omega)$ be a closed K\"ahler manifold of complex dimension $n$ and $\chi$ a smooth closed real $(1,1)$ form in $\Gamma^k_\omega$. Suppose that there is a $\mathcal{C}$-subsolution $\ul u$ and  $\psi \geq c$ for all $x \in M$, where $c$ is an invariant as defined in  \eqref{gjf-int-condition}.
Then there exists a long time solution $u$ to equation~\eqref{pfcqe-int-flow-equation}.
Moreover, the normalization $\hat u$ of $u$ is $C^\infty$ convergent to a smooth function $\hat u_\infty$ where $\hat u$ is defined later in Section~\ref{preliminary}.
Consequently, there is a unique real number $b$ such that the pair $(\hat u_\infty , b)$ solves 
\begin{equation}
	\frac{\chi^k_u \wedge \omega^{n - k}}{\chi^l_u \wedge \omega^{n - l}} = e^b \psi .
\end{equation}
\end{theorem}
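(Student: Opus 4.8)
The plan is to follow the standard parabolic strategy: establish a priori estimates along the flow uniform in $t$, deduce long-time existence, and then prove convergence of the normalization. First I would set up the normalized flow. Since the equation is translation-invariant in $u$ only up to the $\log\psi$ term, the natural normalization is $\hat u := u - \frac{1}{V}\int_M u\,\omega^n$ (or some such, whatever is fixed in Section~\ref{preliminary}), and one should track the oscillation $\mathrm{osc}_M u(\cdot,t)$ rather than $u$ itself. A preliminary observation is that $\sup_M \dot u$ and $\inf_M \dot u$ evolve by the maximum principle in a controlled way — differentiating \eqref{pfcqe-int-flow-equation} in $t$ and applying the parabolic maximum principle shows $\dot u$ is bounded, and in fact the hypothesis $\psi\ge c$ together with the definition of $c$ should force $\int_M$-type integral quantities to be monotone, giving one-sided control that pins down the limiting constant $b$. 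So the first real step is the $C^0$ estimate: bound $\mathrm{osc}_M u(\cdot,t)$ uniformly in $t$. This is where the $\mathcal{C}$-subsolution enters, via an Alexandrov–Bakelman–Pucci / blow-up argument in the style of Sz\'ekelyhidi and of Collins–Sz\'ekelyhidi; the key point is that a $\mathcal{C}$-subsolution controls the set where the equation's structure degenerates, yielding $\|u - \hat u\|_{C^0}$ bounds and hence $\|\hat u\|_{C^0}\le C$.

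Next I would do the second-order estimate. The goal is $|\p\bpartial u|_\omega \le C(1 + \sup_M|\nabla u|^2)$, obtained by applying the maximum principle to a test quantity of the form $\log\lambda_{\max}(\chi_u) + \varphi(|\nabla u|^2) + \Phi(u - \ul u)$ for suitable auxiliary functions $\varphi,\Phi$; the concavity of the operator $F = \log(\sigma_k/\sigma_l)^{1/(k-l)}$ in appropriate variables (or the concavity properties established in the elliptic theory for quotient equations) drives the good third-order terms, and the $\mathcal{C}$-subsolution again supplies the crucial positive term that absorbs the bad ones. Then I would establish the gradient estimate $\sup_M|\nabla u| \le C$ — this can be done by a blow-up/Liouville argument (Dinew–Ko\l odziej style) using the already-obtained $C^0$ and $C^2$ bounds, or directly by a maximum principle on $|\nabla u|^2 e^{\phi}$. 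Combining gradient and Laplacian bounds gives uniform ellipticity of the linearized operator, so the Evans–Krylov theorem (complex parabolic version) yields a uniform $C^{2,\alpha}$ bound, and Schauder bootstrapping gives uniform $C^\infty$ bounds on $\hat u(\cdot,t)$ for all $t\ge 0$. With these estimates in hand, short-time existence (standard for this type of parabolic equation) upgrades to long-time existence.

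The convergence step is the part I expect to be the main obstacle. Uniform $C^\infty$ bounds give subsequential convergence of $\hat u(\cdot,t_j)$, but to get convergence of the full flow and genuine exponential (or at least uniform) decay one needs a monotone functional or a differential inequality. I would try to show that $\int_M \dot u^2\,\omega^n$ (or a weighted version) is integrable in $t$, or that $\mathrm{osc}_M \dot u(\cdot,t) \to 0$, using a Harnack-type inequality from the uniform parabolicity together with the maximum-principle bounds on $\dot u$; the one-sided monotonicity coming from $\psi\ge c$ is exactly the ingredient that should make a suitable energy decay. Once $\dot u \to b$ uniformly for a constant $b$, the uniform higher-order bounds force $\hat u(\cdot,t)\to \hat u_\infty$ in $C^\infty$ along every sequence, hence as $t\to\infty$, and passing to the limit in \eqref{pfcqe-int-flow-equation} shows $(\hat u_\infty,b)$ solves the stated quotient equation; uniqueness of $b$ follows by integrating the equation against $\chi^l_{\hat u_\infty}\wedge\omega^{n-l}$ or by a comparison argument. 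The delicate technical points are (i) closing the $C^2$ estimate in terms of the gradient without circularity, which forces the gradient estimate to be genuinely independent, and (ii) extracting a quantitative decay rate for $\dot u$ rather than mere subconvergence — if exponential decay is hard, a Łojasiewicz-type or integral-Harnack argument should still give convergence without a rate, which suffices for the theorem as stated.
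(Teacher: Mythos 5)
Your outline matches the paper's strategy step for step ($C^0$ via ABP with the $\mathcal{C}$-subsolution, $C^2$ via a Hou--Ma--Wu test function, gradient via Dinew--Ko\l odziej/Gill blow-up, Evans--Krylov plus Schauder, then a Cao/Gill-type Harnack argument for convergence), so the plan is essentially correct. Two places where the paper's specific choices matter and you glossed over them are worth flagging. First, the normalization: you guessed $\hat u = u - \frac{1}{V}\int_M u\,\omega^n$, but the paper normalizes by the $J$-functional, $\hat u = u - J_l(u)/\int_M\chi^l\wedge\omega^{n-l}$, where $\frac{d}{ds}J_l = \int_M \dot v\,\chi^l_v\wedge\omega^{n-l}$. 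This is not cosmetic: the hypothesis $\psi\ge c$ makes $J_l$ \emph{monotone decreasing} along the flow (inequality \eqref{uniform-theorem-kahler-decreasing-J-functional}), giving $\p_t\hat u\ge\p_t u$ and, crucially, $\int_M \p_t\hat u\,\chi^l_u\wedge\omega^{n-l}=0$; the latter identity is exactly what turns the Harnack-type oscillation decay for $\p_t u$ into pointwise decay of $\p_t\hat u$ and hence convergence without a \L ojasiewicz argument. Your vaguer suggestion of ``integral quantities monotone'' is pointing at this but needs the $J_l$-normalization to land. Second, the ABP step: a direct application of the parabolic Alexandroff--Bakelman--Pucci inequality on $M\times[0,T)$ produces a constant depending on $T$, which would propagate $T$-dependence into all higher estimates and kill the convergence argument. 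The paper avoids this by applying ABP only on unit time slabs $[t_0-1,t_0]$ around a potential new minimum, combined with the bound on $\p_t u$ from the maximum principle to rule out trouble for $t\le 1$. Without this time-localization your $C^0$ bound is not actually uniform in $t$, which is the whole point. Your worry (ii) about extracting a decay rate is resolved by the same $J_l$-normalization; the paper gets genuine exponential decay $|\p_t\hat u|\le Ce^{-c_0 t}$ and does not need \L ojasiewicz.
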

Very recently, Phong and T\^{o}~\cite{PhongTo17} were also able to prove the result. 

To obtain the $L^\infty$ estimate, we adapt an approach due to Blocki\cite{Blocki2005a} and Sz\'ekelyhidi\cite{Szekelyhidi2014b}. However the Alexandroff-Bakelman-Pucci maximum principle\cite{Tso85} is dependent on time $t$, which implies that the $L^\infty$ estimate for $u$ probably blows up as time $t$ approaches $\infty$. In this case, higher order estimates were also dependent on time, which means we were unable to prove the convergence. To eliminate the influence of time, we apply Alexandroff-Bakelman-Pucci maximum principle locally in time $t$. 
The proof of the second order estimates follows the work of Hou, Ma and Wu\cite{HouMaWu10}.  In order to apply the work, we need to improve a key lemma in \cite{Sun2014e}. 
The gradient estimate thus follows from the blow up argument of Dinew and Kolodziej\cite{DK} and Gill\cite{Gill14}.
The techniques in this papers can be applied to other flows, while equation~\eqref{pfcqe-int-flow-equation} is the appropriate form to deal with real number $b$ in Theorem~\ref{main-theorem}.

\section{Preliminary}
\label{preliminary}

In this section, we shall state some notations and prove some preliminary results.

We follow the notations in \cite{GSun12, Sun2014e,Sun2013p}. Let $S_k (\boldsymbol{\lambda})$ denote the $k$-th elementary symmetric polynomial of $\boldsymbol{\lambda} \in \mathbb{R}^n$,
\begin{equation}
	S_k (\boldsymbol{\lambda}) = \sum_{1 \leq i_1 < \cdots < i_k \leq n} \lambda_{i_1} \cdots \lambda_{i_k} \,.
\end{equation}
For $\{i_1, \cdots, i_s\} \subseteq\{1,\cdots,n\}$,
\begin{equation}
	S_{k;i_1\cdots i_s} (\boldsymbol{\lambda}) = S_k (\boldsymbol{\lambda}|_{\lambda_{i_1} = \cdots = \lambda_{i_s} = 0}) .
\end{equation}
By convention, $S_0 (\boldsymbol{\lambda}) = 0$ and thus $S_{-1;i_1\cdots i_s} (\boldsymbol{\lambda}) = S_{-2;i_1\cdots i_s} (\boldsymbol{\lambda}) = 0$.
We express $X := \chi_u$ and hence in any local coordinate chart,
\begin{equation}
	X_{i\bar j} = X \Big(\frac{\p}{\p z^i} , \frac{\p}{\p \bar z^j}\Big) = \chi_{i\bar j} + u_{i\bar j} .
\end{equation}
When no confusion occurs in a fixed local coordinate chart, we also abuse $X$ and $\chi$ et al. to denote the corresponding Hermitian matrices. We define $\boldsymbol{\lambda}_*(X)$ as the eigenvalue set of $X$ with respect to $\omega$ and thus write $S_k (X) = S_k (\boldsymbol{\lambda}_* (X))$. For simplicity, we use $S_k$ to denote $S_k (X)$. In any local coordinate chart, equation~\eqref{pfcqe-int-flow-equation} can be written as
\begin{equation}
\label{pfcqe-int-flow-equation-1}
	\frac{\p u}{\p t} = \log \frac{S_k}{S_l} - \log \Psi , 
\end{equation}
with $\Psi = \frac{C^k_n}{C^l_n}\psi$ and $C^k_n = \frac{n!}{(n - k)! k!}$.
For convenience, we use indices to denote covariant derivatives with respect the Chern connection $\nabla$ of $\omega$. 

Fixing a point $p$, we can choose a normal chart around $p$ such that $g_{i\bar j} = \delta_{ij}$ and $X$ is diagonal. Then there is a natural ordered eigenvalue set
\begin{equation}
	\boldsymbol{\lambda}_* (X) = (X_{1\bar 1} , \cdots , X_{n\bar n}) ,
\end{equation}
and we write
\begin{equation}
	S_{k;i_1 \cdots i_s} = S_{k; i_1 \cdots i_s} (\boldsymbol{\lambda}_* (X)) .
\end{equation}
Moreover, we have the following equalities
\begin{equation}
	X_{i\bar jj} - X_{j\bar ji} = \chi_{i\bar jj} - \chi_{j\bar ji} ,
\end{equation}
and 
\begin{equation}
	X_{i\bar ij\bar j} - X_{j\bar ji\bar i} = R_{j\bar ji\bar i} X_{i\bar i} - R_{i\bar ij\bar j} X_{j\bar j} - G_{i\bar ij\bar j}
\end{equation}
where
\begin{equation}
	G_{i\bar ij\bar j} = \chi_{j\bar ji\bar i} - \chi_{i\bar ij\bar j} + \sum_m R_{j\bar ji\bar m} \chi_{m \bar i} - \sum_m R_{i\bar ij\bar m} \chi_{m\bar j} .
\end{equation}
For simplicity, we define
\begin{equation}
	F(\chi_u) := \log \frac{S_k (\chi_u)}{S_l (\chi_u)}
\end{equation}
and
\begin{equation}
	F^{i\bar j} = \frac{\p F}{\p u_{i\bar j}}.
\end{equation}
Then at $p$ under the chosen chart, $\{F^{i\bar j}\}$ is diagonal and 
\begin{equation}
	F^{i\bar i} = \frac{S_{k - 1;i}}{S_K} - \frac{S_{l - 1;i}}{S_l} .
\end{equation}

Differentiating equation \eqref{pfcqe-int-flow-equation-1}  at  $p$ under such a normal coordinate chart,
\begin{equation}
\label{gjf-int-derivative-time}
\p_t (\p_t u) = \sum_i F^{i\bar i} (\p_t u)_{i\bar i} ,
\end{equation}
\begin{equation}
\label{gjf-int-derivative-1}
	\p_t u_m = \sum_i F^{i\bar i} X_{i\bar im} - \p_m \log \Psi ,
\end{equation}
and
\begin{equation}
\begin{aligned}
	\p_t u_{m\bar m} \leq&\, \sum_i F^{i\bar i} X_{i\bar im\bar m} - \sum_{i\neq j} \Big(\frac{S_{k-2;ij}}{S_k} - \frac{S_{l - 2;ij}}{S_l}\Big) X_{i\bar jm} X_{j\bar i\bar m} \\
	&\,- \bpartial_m\p_m \log \Psi .
\end{aligned}
\end{equation}
Applying the maximum principle to equation~\eqref{gjf-int-derivative-time}, we see that $\p_t u$ reaches its extremal values at $t = 0$. 
Thanks to the boundedness of $\p_t u$, it is easy to see that the flow remains in $\Gamma^k_\omega$ at any time.

Fixing a $\mathcal{C}$-subsolution $\ul u$, the set
\begin{equation}
	\Big\{\tilde \chi \in \Gamma^k_\omega (x) \,\Big|\,  \tilde \chi^k \wedge \omega^{n - k} \leq e^\lambda \psi \tilde \chi^l \wedge \omega^{n - l} \text{ and } \tilde \chi - \chi_{\ul u}(x) \geq - \lambda g_{i\bar j} \Big\}
\end{equation}
is also bounded if $\lambda > 0$ is small enough.
Without loss of generality, we may assume that $\sup_M \ul u = - 2 \lambda$ in this paper.  To obtain the second order estimate, we improve a key lemma in \cite{Sun2014e} to fit in with the parabolic case.
\begin{lemma}
\label{quotient-alternate-lemma}
Under the assumptions of Theorem~\ref{main-theorem}, there is a constant $\theta > 0$ such that we have
either
\begin{equation}
\label{quotient-alternate-lemma-1}
	\sum_i F^{i\bar i} (u_{i\bar i} - \ul u_{i\bar i}) \leq F(\chi_u) - \log \Psi - \theta \Big( 1+ \sum_i F^{i\bar i} \Big) ,
\end{equation}
or
\begin{equation}
\label{quotient-alternate-lemma-2}
	F^{j\bar j} \geq \theta \Big(1  +  \sum_i F^{i\bar i}  \Big), \qquad \forall j = 1, \cdots, n.
\end{equation}
\end{lemma}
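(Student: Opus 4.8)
\emph{Plan of proof.} The plan is to reduce the statement to a pointwise inequality for the eigenvalues of $\chi_u$, and then deduce that inequality from the $\mathcal{C}$-subsolution hypothesis, following Sz\'ekelyhidi \cite{Szekelyhidi2014b} and refining the elliptic lemma of \cite{Sun2014e} so that the constants become independent of $t$.

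Fix $p$ and the normal chart in which $g_{i\bar j} = \delta_{ij}$ and $X = \chi_u$ is diagonal, and write $\boldsymbol{\lambda} = (\lambda_1 \geq \cdots \geq \lambda_n) = \boldsymbol{\lambda}_*(X)$, $f(\boldsymbol{\mu}) := \log\big(S_k(\boldsymbol{\mu})/S_l(\boldsymbol{\mu})\big)$ on the $k$-positive cone $\Gamma_k \subset \mathbb{R}^n$, so that $f_i := \p f/\p\mu_i|_{\boldsymbol{\mu} = \boldsymbol{\lambda}}$ equals $F^{i\bar i} = \frac{S_{k-1;i}}{S_k} - \frac{S_{l-1;i}}{S_l}$. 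The ingredients I would use are: $f$ is concave and elliptic on $\Gamma_k$, so $0 < f_1 \leq \cdots \leq f_n$; the homogeneity identity $\sum_i f_i \lambda_i = k - l$; and, crucially, that by the maximum principle for \eqref{gjf-int-derivative-time} together with the smoothness and positivity of $\psi$, the value $F(\chi_u) = \p_t u + \log\Psi$ stays in a fixed compact interval $[\sigma_1,\sigma_2]$ independent of $(p,t)$. Since $u_{i\bar i} - \ul u_{i\bar i} = \lambda_i - (\chi_{\ul u})_{i\bar i}$, and a rearrangement (von Neumann trace) inequality gives $\sum_i f_i (\chi_{\ul u})_{i\bar i} \geq \sum_i f_i \ul\lambda_i$ for the decreasing eigenvalues $\ul\lambda_1 \geq \cdots \geq \ul\lambda_n$ of $\chi_{\ul u}(p)$, it is enough to produce a uniform $\theta > 0$ such that
\begin{equation*}
	\sum_i f_i \ul\lambda_i \,\geq\, \theta\Big(1 + \sum_i f_i\Big) + (k-l) - \p_t u \qquad\text{or}\qquad f_1 \,\geq\, \theta\Big(1 + \sum_i f_i\Big).
\end{equation*}

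I would then split into two cases. If $\lambda_1 \leq R$ for a suitable $R$, then $S_1 > 0$ and $f(\boldsymbol{\lambda}) \geq \sigma_1$ confine $\boldsymbol{\lambda}$ to a fixed compact subset of the open cone $\Gamma_k$, so $f_i \geq c_0 > 0$ and $\sum_i f_i \leq C$ uniformly, and the second alternative holds with $\theta \leq c_0/(1+C)$. If $\lambda_1 > R$, I would aim for the first alternative and invoke the $\mathcal{C}$-subsolution: for a small $\delta > 0$ the sets $\{\boldsymbol{\mu}\in\Gamma_k : f(\boldsymbol{\mu}) \leq \sigma_2 + \delta,\ \mu_i \geq \ul\lambda_i(x) - \delta \text{ for all } i\}$ are uniformly bounded over $x \in M$, say by $R_0$; taking $R \geq R_0$ then forces $\lambda_i < \ul\lambda_i - \delta$ for at least one index $i$, and yields $f\big((\ul\lambda_1(x),\dots,\ul\lambda_n(x)) + S\mathbf{e}_j\big) > \sigma_2 + \delta$ for every $j$ and a fixed large $S$. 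Feeding the comparison points $(\ul\lambda_1,\dots,\ul\lambda_n) + S\mathbf{e}_j$ into the concavity inequality $f(\boldsymbol{\mu}) \leq f(\boldsymbol{\lambda}) + \sum_i f_i(\mu_i - \lambda_i)$, and combining the resulting inequalities with $\sum_i f_i\lambda_i = k-l$, the ordering $f_1 \leq \cdots \leq f_n$, and the \emph{assumption that the second alternative fails} — so that $f_1$ is a small fraction of $\sum_i f_i$, which controls the negative contributions coming from the large eigenvalues — I expect to recover $\sum_i f_i \ul\lambda_i \geq \theta(1 + \sum_i f_i) + O(1)$.

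The hardest part is precisely the case $\lambda_1 \to \infty$: there $\sum_i f_i$ may itself be unbounded, so a crude use of the concavity inequality only gives a lower bound by a constant rather than by $\theta(1 + \sum_i f_i)$, and one must keep careful track of which eigenvalues are large and how the corresponding $f_i$ and $\ul\lambda_i$ compare before the good terms can be made to dominate $\theta\sum_i f_i$. This is the step in which the quantitative boundedness of the $\mathcal{C}$-subsolution set, the identity $\sum_i f_i\lambda_i = k-l$, and the failure of the second alternative must all be used at once, and it is where the argument of \cite{Sun2014e} has to be upgraded so that $R,\delta,S$ and $\theta$ depend only on $\omega,\chi,\psi$ and $\ul u$, and not on $t$.
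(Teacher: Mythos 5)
Your plan assembles the right ingredients (concavity, the quantitative boundedness coming from the $\mathcal C$-subsolution, the ordering $f_1\leq\cdots\leq f_n$, and $\sum_i f_i\lambda_i=k-l$), but it stops short of a proof exactly at the step you yourself flag: in the case $\lambda_1>R$ you propose to feed the comparison points $(\ul\lambda_1,\dots,\ul\lambda_n)+S\mathbf e_j$ into the concavity inequality for \emph{every} $j$ and ``expect to recover'' the first alternative after combining, but the combination is never carried out, and as stated it cannot work. The concavity inequality against $\ul\lambda+S\mathbf e_j$ for $j>1$ produces a term $Sf_j$, and $f_j$ may be comparable to $\sum_i f_i$, so these terms are of the wrong size and sign for producing a $-\theta\sum_i f_i$; only the direction carrying the \emph{smallest} $f_i$ yields a usable inequality. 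The preliminary case split on $\lambda_1\leq R$ and the rearrangement inequality $\sum_i f_i(\chi_{\ul u})_{i\bar i}\geq\sum_i f_i\ul\lambda_i$ are also detours that the final argument does not need.

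The paper instead applies the matrix concavity of $F$ once, with a single test matrix. Order so that $X_{1\bar1}\geq\cdots\geq X_{n\bar n}$, hence $F^{1\bar1}\leq\cdots\leq F^{n\bar n}$, and set $\chi'=\chi_{\ul u}-\lambda g+N\,e_1\otimes e_1$; the $\mathcal C$-subsolution property for $\chi-\lambda\omega$ gives uniform $N,\sigma>0$ with $F(\chi')>\log\Psi+\sigma$. Concavity of $F$ on Hermitian matrices then yields
\begin{equation*}
\sum_i F^{i\bar i}(u_{i\bar i}-\ul u_{i\bar i})\;\leq\;F(\chi_u)-\log\Psi-\sigma-\lambda\sum_i F^{i\bar i}+N F^{1\bar 1},
\end{equation*}
and the dichotomy is immediate: either $NF^{1\bar1}\leq\tfrac12\min\{\sigma,\lambda\}\big(1+\sum_i F^{i\bar i}\big)$, which gives \eqref{quotient-alternate-lemma-1}, or else $F^{1\bar1}$ -- and therefore every $F^{j\bar j}$, since $F^{1\bar1}$ is the smallest -- dominates a fixed multiple of $1+\sum_i F^{i\bar i}$, giving \eqref{quotient-alternate-lemma-2}. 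The essential choice you were missing is to perturb only in the eigendirection of the largest eigenvalue of $\chi_u$, i.e.\ the direction paired with the smallest $F^{i\bar i}$, so that the single error term $NF^{1\bar1}$ is the one that can be absorbed.
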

\begin{proof}
Without loss of generality, we may assume that $X_{1\bar 1} \geq \cdots \geq X_{n\bar n}$. Thus 
\begin{equation}
F^{n\bar n} \geq \cdots \geq F^{1\bar 1} .
\end{equation}

If $\lambda > 0$ is small enough, $\chi - \lambda \omega$ and $\ul u$ still satisfy Definition~\ref{definition-subsolution}. Since $M$ is compact, there are uniform constants $N > 0$ and $\sigma > 0$ such that
\begin{equation}
	F (\chi') >  \log {\Psi} + \sigma ,
\end{equation}
where
\begin{equation}
	\chi' =\chi_u - \lambda g +  
	\begin{Bmatrix}
	& N &	  &	 			& \\
	&	 &	0 &   			& \\
	&	 &	  &	\ddots	 & \\
	&	 &	  &				 & 0
	\end{Bmatrix}_{n\times n} .
\end{equation}
Direct calculation shows that
\begin{equation}
\begin{aligned}
	\sum_i F^{i\bar i} (u_{i\bar i} - \ul u_{i\bar i}) 
	=&\, \sum_i F^{i\bar i} X_{i\bar i} - \sum_i F^{i\bar i} \chi'_{i\bar i} + N F^{1\bar 1} - \lambda \sum_i F^{i\bar i} \\
	\leq&\, F (\chi_u) - F (\chi') + N F^{1\bar 1} - \lambda \sum_i F^{i\bar i} \\
	\leq&\, F (\chi_u) - \log \Psi - \sigma - \lambda \sum_i F^{i\bar i} + N F^{1\bar 1} .
\end{aligned}	
\end{equation}
If 
\begin{equation}
	 \frac{ \min \{\sigma, \lambda\} }{2} \Big( 1 + \sum_i F^{i\bar i} \Big)\geq  N F^{1\bar 1} ,
\end{equation}
we obtain \eqref{quotient-alternate-lemma-1}; otherwise, inequality~\eqref{quotient-alternate-lemma-2} has to be true.

\end{proof}

The argument can be applied to elliptic and parabolic equations with $\mathcal{C}$-subsolution. For the parabolic equation, we shall use the following corollary.
The argument applies to general functions $F$.
\begin{corollary}
\label{quotient-alternate-corollary}
Under the assumption of Lemma~\ref{quotient-alternate-lemma} and additionaly assuming that $X_{1\bar 1} \geq \cdots \geq X_{n\bar n}$, there is a constant $ \theta > 0 $ such that we have either
\begin{equation}
\label{quotient-alternate-corollary-1}
\begin{aligned}
	 \sum_i F^{i\bar i} (u_{i\bar i} - \ul u_{i\bar i}) - \p_t u \leq&\,   - \theta \Big(1 +  \sum_i F^{i\bar i} \Big),
\end{aligned}	
\end{equation}
or
\begin{equation}
\label{quotient-alternate-corollary-2}
	F^{1\bar 1} X_{1\bar 1} \geq \theta\Big(1 +  \sum_i F^{i\bar i} \Big) .
\end{equation}
\end{corollary}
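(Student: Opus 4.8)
The plan is to derive Corollary~\ref{quotient-alternate-corollary} from Lemma~\ref{quotient-alternate-lemma} by combining the two alternatives of the lemma with the parabolic equation \eqref{pfcqe-int-flow-equation-1}, which reads $\p_t u = F(\chi_u) - \log \Psi$. Substituting this into the right-hand side of \eqref{quotient-alternate-lemma-1} immediately turns that inequality into
\begin{equation*}
	\sum_i F^{i\bar i} (u_{i\bar i} - \ul u_{i\bar i}) \leq \p_t u - \theta \Big( 1 + \sum_i F^{i\bar i} \Big),
\end{equation*}
which is exactly \eqref{quotient-alternate-corollary-1}. So in the first case of the lemma there is nothing more to do. The work is therefore entirely in the second case, where \eqref{quotient-alternate-lemma-2} holds, i.e. $F^{j\bar j} \geq \theta ( 1 + \sum_i F^{i\bar i} )$ for every $j$; from this I must extract \eqref{quotient-alternate-corollary-2}, namely a lower bound on $F^{1\bar 1} X_{1\bar 1}$.

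First I would dispose of the trivial sub-case $X_{1\bar 1} \leq 0$. Since $X = \chi_u \in \Gamma^k_\omega$ and the eigenvalues are ordered $X_{1\bar 1} \geq \cdots \geq X_{n\bar n}$, having $X_{1\bar 1} \leq 0$ would force all $X_{i\bar i} \leq 0$, contradicting $k$-positivity (at least $S_1 > 0$). Hence $X_{1\bar 1} > 0$; in fact one gets a uniform positive lower bound on $X_{1\bar 1}$ because $S_1(X) = \sum_i X_{i\bar i} \leq n X_{1\bar 1}$ and $S_1$ is bounded below away from zero (this can be arranged, e.g. from the $\mathcal{C}$-subsolution hypothesis together with the lower bound $\chi_u \geq \chi_{\ul u} - \lambda g$ available on the flow, or simply from $X_{1\bar 1}$ being the largest eigenvalue of an element of $\Gamma^k_\omega$). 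With $X_{1\bar 1} \geq c_0 > 0$ for a uniform $c_0$, the second alternative of the lemma gives
\begin{equation*}
	F^{1\bar 1} X_{1\bar 1} \geq c_0 F^{1\bar 1} \geq c_0 \theta \Big( 1 + \sum_i F^{i\bar i} \Big),
\end{equation*}
and after relabelling the constant $\theta$ (replace it by $\min\{\theta, c_0 \theta\}$, or rename $c_0\theta$ as the new $\theta$) this is precisely \eqref{quotient-alternate-corollary-2}. The phrase ``The argument applies to general functions $F$'' signals that I should only use structural facts — $F^{i\bar i} > 0$, the ordering of eigenvalues matching the reverse ordering of $F^{i\bar i}$, and $\Gamma^k_\omega$-membership — rather than the specific quotient form, and indeed the above uses nothing more.

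The one point deserving care, and the likely main obstacle, is justifying the uniform lower bound $X_{1\bar 1} \geq c_0 > 0$ in a way that is legitimately time-independent along the flow. The clean route is: along the flow $\chi_u - \chi_{\ul u} \geq -\lambda g$ is not automatic, but the boundedness of $\p_t u$ (proved just before the lemma, via the maximum principle on \eqref{gjf-int-derivative-time}) controls $F(\chi_u) = \log(S_k/S_l)$ from above and below, and $S_l \leq C^l_n X_{1\bar 1}^{\,l}$ while $S_k \geq$ (a positive multiple of) the product structure forcing $X_{1\bar 1}$ to be comparable to $\max_i |X_{i\bar i}|$; combined with $X \in \Gamma^k_\omega$ this pins $X_{1\bar 1}$ below. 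Alternatively — and this is cleanest for a corollary meant to feed into the second-order estimate — one can simply note that if $X_{1\bar 1}$ is small then all eigenvalues are bounded, so $F^{1\bar 1}$ is bounded below by a positive constant and $\sum_i F^{i\bar i}$ is bounded above, and then \eqref{quotient-alternate-corollary-2} holds trivially for small $\theta$; this bypasses any delicate lower bound on $X_{1\bar 1}$ entirely. I would write the proof using this second, more robust dichotomy.
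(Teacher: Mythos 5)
Your handling of the first alternative is correct and matches the paper: substituting $\p_t u = F(\chi_u) - \log\Psi$ into \eqref{quotient-alternate-lemma-1} gives \eqref{quotient-alternate-corollary-1} immediately. You also correctly isolate the key issue in the second case: you need a uniform positive lower bound on $X_{1\bar 1}$ to convert $F^{1\bar 1} \geq \theta(1 + \sum F^{i\bar i})$ into \eqref{quotient-alternate-corollary-2}.

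However, the route you call ``cleanest'' and elect to write up --- if $X_{1\bar 1}$ is small then all eigenvalues are small, so $F^{1\bar 1}$ is bounded below and $\sum_i F^{i\bar i}$ is bounded above --- is not correct. Near the vertex of the cone, $F^{i\bar i}$ blows up rather than staying bounded. Concretely, take $\boldsymbol\lambda = (\epsilon,\dots,\epsilon)$: then $S_{k-1;i}/S_k = k/(n\epsilon)$ and $S_{l-1;i}/S_l = l/(n\epsilon)$, so $F^{i\bar i} = (k-l)/(n\epsilon)$, giving $\sum_i F^{i\bar i} = (k-l)/\epsilon \to \infty$ while $F^{1\bar 1} X_{1\bar 1} = (k-l)/n$ stays bounded; the ratio $F^{1\bar 1} X_{1\bar 1}/(1+\sum_i F^{i\bar i}) \to 0$, so \eqref{quotient-alternate-corollary-2} would fail. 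Smallness of $X_{1\bar 1}$ alone is therefore not enough; one must use the equation to rule out this degeneration. Your first route gestures at this (via boundedness of $\p_t u$) but never pins it down.

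The paper's mechanism is shorter and worth recording explicitly. Since $F$ is concave and symmetric, $F\bigl(\tfrac{S_1}{n}g\bigr) \geq F(X) = \p_t u + \log\Psi$, and $\p_t u$ is bounded by the maximum principle applied to \eqref{gjf-int-derivative-time}. But $F\bigl(\tfrac{S_1}{n}g\bigr) = \log\bigl[\tfrac{C^k_n}{C^l_n}(S_1/n)^{k-l}\bigr]$, which tends to $-\infty$ as $S_1 \to 0$; hence $S_1 \geq \sigma > 0$ uniformly, and then $X_{1\bar 1} \geq S_1/n$ gives the required lower bound. Plugging this into $F^{1\bar 1} \geq \theta(1+\sum_i F^{i\bar i})$ and relabeling $\theta$ finishes the proof. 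This is the step you need to supply; your alternative dichotomy does not close the gap.
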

\begin{proof}

Note that
\begin{equation}
	X_{1\bar 1} \geq \frac{S_1}{n} > 0,
\end{equation}
and
\begin{equation}
	F \Big(\frac{S_1}{n} g\Big) \geq  F( X) = \p_t u + \log \Psi
\end{equation}
 by concavity and symmetry of $F$. The latter inequality implies that there is a constant $\sigma>0$ such that $S_1 > \sigma$, and hence the former shows that $X_{1\bar 1} > n \sigma$. Combining Lemma~\ref{quotient-alternate-lemma} and the fact that $X_{1\bar 1} > n \sigma$, we finish the proof.

\end{proof}


We adapt the general $J$-functionals\cite{Chen00b,FLM11}.  
Let $\mathcal{H}$ be the space
\begin{equation}
	\mathcal{H} := \{ u\in C^\infty(M) \;|\; \chi_u \in \Gamma^k_\omega\} .
\end{equation}
For any curve $v(s) \in \mathcal{H}$, we define the funtional $J_l$ by
\begin{equation}
\label{uniform-J-functional-defition-derivative}
	\frac{d J_l}{d s} = \int_M \frac{\partial v}{\partial s} \chi^l_v \wedge \omega^{n - l} .
\end{equation}
Then we have a formula for $J_l(u)$,
\begin{equation}
\label{uniform-J-functional-defition-formula}
J_l (u) = \int^1_0 \int_M \frac{\partial v}{\partial s} \chi^l_v \wedge \omega^{n - l} ds,
\end{equation}
where $v(s)$ is an arbitrary path in $\mathcal{H}$ connecting $0$ and $u$. 
Thanks to the closedness of $\chi$ and $\omega$, those functionals are independent from choices of the path.
If the integration is over $v(s) = s u$, 
\begin{equation}
\label{uniform-J-functional-calculation-line}
\begin{aligned}
	J_l (u) 
	&= \int^1_0 \int_M u \left(s \chi_u + (1 - s)\chi\right)^l \wedge \omega^{n - l} ds \\
	&= \frac{1}{l + 1} \sum^l_{i = 0}  \int_M u \chi^i_u \wedge \chi^{l - i} \wedge \omega^{n - l} .
\end{aligned}
\end{equation}
Along the solution flow $u(x,t)$ to equation~\eqref{pfcqe-int-flow-equation}, we have
\begin{equation}
\label{uniform-theorem-kahler-decreasing-J-functional}
\begin{aligned}
	\frac{d}{dt} J_l (u) 
	&= \int_M \left(\log \frac{\chi^k_u \wedge \omega^{n - k}}{\chi^l_u \wedge \omega^{n - l}} - \log \psi\right) \chi^l_u \wedge \omega^{n - l} \\
	&\leq \log c\int_M \chi^l_u \wedge \omega^{n - l}  - \int_M \log \psi \chi^l_u \wedge \omega^{n - l} \\
	&\leq 0 .
\end{aligned}
\end{equation}
Computing $J_l$ on an arbitrary function flow $u(x,t)$ starting from $0$ to $T$, 
it follows that
\begin{equation}
\label{uniform-J-functional-calculation-flow}
\begin{aligned}
	J_l(u(T)) 
	&= \int^T_0 \frac{d J_l}{d t} dt .
\end{aligned}
\end{equation}
For the solution flow $u(x,t)$ to equation~\eqref{pfcqe-int-flow-equation}, let 
\begin{equation}
\label{uniform-time-normalized-solution-defintion}
\hat u = u - \frac{J_l (u)}{\int_M \chi^l\wedge \omega^{n - l}} .
\end{equation}
By \eqref{uniform-theorem-kahler-decreasing-J-functional}, we know that $\p_t \hat u \geq \p_t u$.

\section{The $L^\infty$ estimate}
\label{uniform}

In this section, we shall prove the $L^\infty$ estimate. We follow the approach of Sz\'ekelyhidi\cite{Szekelyhidi2014b} based on the method of Blocki\cite{Blocki2005a}.
\begin{theorem}
\label{uniform-theorem}
Let $u \in C^2 (M \times [0,T))$ be an admissible solution to equation~\eqref{pfcqe-int-flow-equation}. Suppose that there is a $\mathcal{C}$-subsolution $\ul u$ and  $\psi \geq c$ for all $x \in M$.
Then there exists a uniform constant $C > 0$ such that for any $t\in [0,T)$
\begin{equation}
	\sup_M u (x,t) - \inf_M u(x,t) = \sup_M \hat u(x,t) - \inf_M \hat u(x,t) < C .
\end{equation}
\end{theorem}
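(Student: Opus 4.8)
The plan is to adapt the Alexandroff--Bakelman--Pucci (ABP) maximum principle argument of B\l ocki and Sz\'ekelyhidi to the parabolic setting, but applied \emph{locally in time} so that the resulting constant does not degenerate as $T \to \infty$. Since $\p_t u$ is bounded (its extremal values occur at $t = 0$ by the maximum principle applied to \eqref{gjf-int-derivative-time}), equation~\eqref{pfcqe-int-flow-equation-1} shows that at each fixed time the function $u(\cdot, t)$ satisfies an elliptic inequality of the form $\log \tfrac{S_k}{S_l} = \p_t u + \log \Psi$ with uniformly bounded right-hand side; thus one may essentially run the elliptic estimate time-slice by time-slice, provided one controls the implicit dependence on $\sup_M u(\cdot,t)$.

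First I would reduce to bounding $\inf_M (u - \ul u)(\cdot, t)$, since an upper bound for $\sup_M(u - \ul u)$ (equivalently $\sup_M u$, as $\ul u$ is fixed) is cheap: at an interior spatial maximum of $u - \ul u$ we have $\chi_u \le \chi_{\ul u}$ as Hermitian matrices, and the $\mathcal{C}$-subsolution hypothesis (boundedness of the admissible set in Definition~\ref{definition-subsolution}, together with the equation forcing $\chi_u^k \wedge \omega^{n-k} / (\chi_u^l \wedge \omega^{n-l})$ into a bounded range) pins down $\chi_u$ at that point, hence $\Delta u$ is controlled there. The genuinely substantive half is the lower bound: set $m(t) = \inf_M u(\cdot,t)$ and suppose $u(x_0,t) = m(t)$. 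Working in a coordinate ball around $x_0$, one applies the ABP estimate to the function $w = u - \ul u + \varepsilon |z - x_0|^2$ on a small ball; on the contact set where $w$ attains values near its minimum, $D^2 w \ge 0$ forces $\chi_u \ge \chi_{\ul u} - C\varepsilon\, g$, and combined with the a priori bound on the quotient $S_k/S_l$ coming from the equation and the boundedness of $\p_t u$, the $\mathcal{C}$-subsolution property forces this contact set to have small measure unless $m(t) - u$ is already bounded. Quantitatively this yields $\int_{\{\text{contact}\}} \det(D^2 w) \ge c_0 > 0$ against an upper bound proportional to a power of $\mathrm{osc}\, u$, giving $\mathrm{osc}_M\, u(\cdot,t) \le C$ with $C$ independent of $t$.

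Two technical points need care. The first is that the ABP constant in Tso's parabolic version genuinely involves the time interval, which is why I localize: rather than use a parabolic ABP on $M \times [0,t]$, I freeze time and use the \emph{elliptic} ABP on each slice, legitimate precisely because $\p_t u \in L^\infty$ uniformly in $t$ turns \eqref{pfcqe-int-flow-equation-1} into a time-independent family of elliptic inequalities. The second is translating ``$\mathrm{osc}$'' statements into statements about $\hat u$: since $\hat u$ and $u$ differ by the spatially-constant quantity $J_l(u)/\int_M \chi^l \wedge \omega^{n-l}$, the oscillations coincide, so the displayed equality in the statement is automatic once $\mathrm{osc}_M\, u$ is bounded.

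I expect the main obstacle to be making the contact-set argument fully quantitative while keeping every constant independent of $t$ and $T$: one must verify that the measure lower bound $c_0$ on the image of the gradient map, which relies on the admissible set in Definition~\ref{definition-subsolution} being contained in a fixed ball, is not silently using a bound that worsens with $\sup_M u$ or with time. The hypothesis $\psi \ge c$ enters here (and in \eqref{uniform-theorem-kahler-decreasing-J-functional}) to guarantee the relevant normalization, and one should check it is used only through the uniform bound on $\p_t u$ and the monotonicity of $J_l$, both already established, rather than through any fresh estimate. Once the uniform oscillation bound is in hand, the theorem follows immediately.
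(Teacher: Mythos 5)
Your main idea --- that the uniform bound on $\partial_t u$ turns the flow into a one-parameter family of elliptic equations with uniformly bounded data, so that one may run the B\l ocki--Sz\'ekelyhidi ABP argument on each time slice (elliptic version) rather than use Tso's parabolic ABP --- is sound, and it is a genuinely different route from the paper. The paper instead keeps the parabolic ABP but applies it on the unit window $[t_0-1,t_0]$, choosing $t_0$ to be the \emph{first} time the infimum crosses the threshold $-2\sup_{M\times\{0\}}|\partial_t u|-C_0$; the time-localization is what removes the $t$-dependence of the ABP constant. Both approaches turn on exactly the same engine: the Green's function $L^1$ bound on $\sup_M u - u(\cdot,t)$, uniform in $t$ because $\Gamma^k_\omega\subset\Gamma^1_\omega$. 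Your version arguably avoids the bookkeeping with the $-\epsilon(t-t_0)$ term and the stopping-time argument.

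However, there is a concrete gap in the reduction step. You claim the upper bound on $\sup_M(u-\ul u)$ is ``cheap'' via a pointwise maximum principle, arguing that at an interior spatial maximum of $u - \ul u$ one has $\chi_u\le\chi_{\ul u}$ and this, with the $\mathcal{C}$-subsolution property, pins down $\chi_u$ there. This does not work: Definition~\ref{definition-subsolution} only constrains $\tilde\chi$ with $\tilde\chi - \chi_{\ul u}\ge 0$, i.e.\ the \emph{opposite} inequality, so at a maximum of $u-\ul u$ the $\mathcal{C}$-subsolution hypothesis gives you no information. More fundamentally, no pointwise maximum-principle argument can bound $\sup_M u(\cdot,t)$ by itself, because the flow may drift by an arbitrary spatially-constant amount (the trend of $\partial_t u$ need not vanish when $\psi\not\equiv c$), and a pointwise inequality on $\chi_u$ is blind to this additive constant. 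The paper handles this via Lemma~\ref{uniform-lemma-1}, which uses the $J_l$-functional normalization defining $\hat u$ (so that $\inf_M\hat u\le 0\le\sup_M\hat u$ automatically, and the concavity/integral argument of Weinkove and Sun gives $\sup_M\hat u\le -C_1\inf_M\hat u + C_2$). In your elliptic-slice framework, the cleanest fix is simply to work with $w(\cdot,t):= u(\cdot,t)-\sup_M u(\cdot,t)$ so that the upper bound is tautological, then run the ABP and Green's function argument to get $\inf_M w > -C$; this yields the oscillation bound directly and is compatible with your slicing. As written, though, the upper-bound step is incorrect and needs replacement before the proof closes.
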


To prove Theorem~\ref{uniform-theorem}, we need the following lemma. The argument follows closely those in \cite{Weinkove06,Sun2013p}, so we omit the proof. 
\begin{lemma}
\label{uniform-lemma-1}
\begin{equation}
\label{uniform-lemma-1-inequality}
	0 \leq \sup_M \hat u(x , t) \leq - C_1 \inf_M \hat u(x , t) + C_2 \quad\text{ and } \quad\inf_M \hat u(x,t) \leq 0 .
\end{equation}
\end{lemma}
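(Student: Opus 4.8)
The plan is to exploit the fact that the normalization \eqref{uniform-time-normalized-solution-defintion} is designed precisely so that a weighted average of $\hat u$ vanishes at every time $t$; once this is observed, the lemma reduces to positivity of mixed Hessian volume forms, the scalar bound $\Delta_\omega\hat u\geq -C$, and a Green's function estimate on the \emph{fixed} background $(M,\omega)$. First I would record that $\hat u=u-a(t)$ with $a(t):=J_l(u(\cdot,t))/V$ and $V:=\int_M\chi^l\wedge\omega^{n-l}>0$, so $\hat u$ and $u$ differ by a function of $t$ alone; this gives the identity $\sup_M u-\inf_M u=\sup_M\hat u-\inf_M\hat u$ used in Theorem~\ref{uniform-theorem} and reduces everything to statements about $\hat u$. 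Applying formula \eqref{uniform-J-functional-calculation-line} with $u$ replaced by $\hat u$ (and $\chi_{\hat u}=\chi_u$), and using $\int_M\chi_u^i\wedge\chi^{l-i}\wedge\omega^{n-l}=V$ for every $i$ by Stokes' theorem, one gets $J_l(\hat u)=J_l(u)-a(t)V=0$. Parametrizing by $v(s)=s\hat u$, for which $\chi_{v(s)}=(1-s)\chi+s\chi_u\in\Gamma^k_\omega$ since $\Gamma^k_\omega$ is a convex cone, this reads
\begin{equation*}
	0=J_l(\hat u)=\int_M\hat u\,d\nu,\qquad d\nu:=\int_0^1\chi_{v(s)}^l\wedge\omega^{n-l}\,ds,
\end{equation*}
where $d\nu$ is a nonnegative measure with $\int_M d\nu=V$. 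Since $d\nu\geq 0$ has positive total mass and integrates the continuous function $\hat u$ to zero, necessarily $\inf_M\hat u\leq 0\leq\sup_M\hat u$, which is the outer pair of inequalities in \eqref{uniform-lemma-1-inequality}.

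For the remaining inequality $\sup_M\hat u\leq -C_1\inf_M\hat u+C_2$, I would first upgrade $d\nu$ to a uniform lower bound $d\nu\geq\varepsilon_0\omega^n$ with $\varepsilon_0>0$ depending only on $(M,\omega,\chi)$: concavity of $S_l^{1/l}$ on the $l$-positive cone gives $\chi_{v(s)}^l\wedge\omega^{n-l}\geq (1-s)^l\,\chi^l\wedge\omega^{n-l}$, and since $\chi\in\Gamma^k_\omega\subseteq\Gamma^l_\omega$ is fixed on the compact $M$ one has $\chi^l\wedge\omega^{n-l}\geq c_0\omega^n$, so $\varepsilon_0=c_0/(l+1)$ works. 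Writing $\hat u=\hat u^+-\hat u^-$, the vanishing $\int_M\hat u\,d\nu=0$ yields $\int_M\hat u^+\,d\nu=\int_M\hat u^-\,d\nu\leq(-\inf_M\hat u)\,V$, and combined with $d\nu\geq\varepsilon_0\omega^n$ this gives $\int_M\hat u\,\omega^n\leq\int_M\hat u^+\,\omega^n\leq\varepsilon_0^{-1}V(-\inf_M\hat u)$. On the other hand, $\chi_u\in\Gamma^k_\omega\subseteq\Gamma^1_\omega$ forces $\Delta_\omega\hat u=\mathrm{tr}_\omega\chi_u-\mathrm{tr}_\omega\chi\geq -\sup_M\mathrm{tr}_\omega\chi=:-A$, a bound uniform in $t$; the standard Green's function estimate on $(M,\omega)$ then gives $\sup_M\hat u\leq V^{-1}\int_M\hat u\,\omega^n+C'A$ with $C'$ depending only on $(M,\omega)$. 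Combining the two displays yields $\sup_M\hat u\leq (l+1)c_0^{-1}(-\inf_M\hat u)+C'A$, i.e. the asserted inequality with $C_1=(l+1)c_0^{-1}$ and $C_2=C'A$.

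The analytic content here is light, and I do not expect a serious obstacle; the two points that need care are (i) that the lower bound $d\nu\geq\varepsilon_0\omega^n$ is genuinely uniform, which is where compactness of $M$ together with concavity of $S_l^{1/l}$ (equivalently, Gårding positivity of the mixed forms $\chi_u^i\wedge\chi^{l-i}\wedge\omega^{n-l}$) is used, and (ii) that the Green's function constant $C'$ depends only on $(M,\omega)$ and $\|\mathrm{tr}_\omega\chi\|_{L^\infty}$, not on $t$ or on the solution. The latter is precisely why the normalization is taken with respect to the functional $J_l$ rather than with respect to $\sup_M u$ or $\int_M u\,\omega^n$, and is what ultimately makes the higher-order estimates — and hence the convergence in Theorem~\ref{main-theorem} — independent of $t$.
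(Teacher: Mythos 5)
Your argument is correct and follows the standard route that the paper delegates to \cite{Weinkove06,Sun2013p}: the $J_l$-normalization makes $\hat u$ integrate to zero against the path-averaged measure $d\nu=\int_0^1\chi^l_{s\hat u}\wedge\omega^{n-l}\,ds$, the G\aa rding concavity of $S_l^{1/l}$ gives the uniform lower bound $d\nu\geq\varepsilon_0\,\omega^n$, and the $\Gamma^1$-positivity of $\chi_u$ plus the Green's function of $\omega$ converts the $L^1$ control into the $\sup$ bound. The only slip is cosmetic: in the Green's function inequality the average of $\hat u$ should be taken with weight $\left(\int_M\omega^n\right)^{-1}$, not $V^{-1}=\left(\int_M\chi^l\wedge\omega^{n-l}\right)^{-1}$; since both are fixed positive constants of $(M,\omega,\chi)$ this only changes $C_1$ by a harmless factor and does not affect the validity of \eqref{uniform-lemma-1-inequality}.
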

This lemma tells us that it suffices to find a lower bound for $\inf_M (\hat u - \ul u)(x , t)$.

\begin{proof}[Proof of Theorem~\ref{uniform-theorem}]
We claim that
\begin{equation}
\label{uniform-lower-bound}
	\inf_M (\hat u - \ul u) (x,t) > - 2 \sup_{M\times\{0\}} |\p_t u| - C_0 ,
\end{equation}
where $C_0 \geq 0$ is to be determined later.

Since $\p_t u$ reaches its extremal values at $t = 0$, 
\begin{equation}
\label{gjf-int-max-3}
	|\p_t \hat u| \leq 2 \sup_{M\times\{0\}} |\p_t u| .
\end{equation}
So when $t \leq 1$, we have
\begin{equation}
	\inf_M (\hat u - \ul u) (x,t) \geq \inf_M \hat u (x,t) + 2 \lambda \geq - 2 t \sup_{M\times\{0\}} |\p_t u| + 2 \lambda .
\end{equation}
Therefore if the lower bound~\eqref{uniform-lower-bound} does not hold, there must be time $t_0 > 1$ such that
\begin{equation}
	\inf_M (\hat u - \ul u) (x , t_0) = \inf_{M \times [0,t_0]} (\hat u - \ul u) (x,t) = - 2 \sup_{M\times\{0\}} |\p_t u| - C_0 .
\end{equation}
We may assume that $(\hat u - \ul u) (x_0 ,  t_0) = \inf_M (\hat u - \ul u) (x , t_0) < 0$. Following the approach of Sz\'ekelyhidi\cite{Szekelyhidi2014b}, we work in local coordinates around $x_0$, where $x_0$ is the origin and the coordinates are defined for $B_1 = \{z : |z| < 1\}$. Let $v = \hat u - \ul u - \epsilon + \epsilon |z|^2 - \epsilon (t - t_0) - \inf_M (\hat u - \ul u) (x , t_0)$ for some small $\epsilon > 0$. We may assume that $\epsilon < \lambda$. It is easy to see that when $t = t_0 - 1$
\begin{equation}
	v = \hat u - \ul u + \epsilon |z|^2  - \inf_M (\hat u - \ul u) (x, t_0) \geq 0,
\end{equation}
and when $|z|^2 = 1$, $t \leq t_0$
\begin{equation}
	v = \hat u - \ul u - \epsilon (t - t_0) - \inf_M (\hat u - \ul u) (x , t_0) \geq 0.
\end{equation}
Moreover,
\begin{equation}
	\inf_{M \times [t_0 - 1,t_0]} v = \inf_{M\times \{t_0\}} v = v (x_0,t_0) =  - \lambda .
\end{equation}
Define the set for $- v$ on $ [t_0 - 1, t_0]$,
\begin{equation}
\begin{aligned}
	\Phi (y, t) = \Big\{(p,h) \in \mathbb{R}^{2 n + 1}\,\Big| &- v (x , s) \leq - v(y , t) + p \cdot (x - y), \\
	& h =-  v (y,t) - p \cdot y, \forall x \in B_1, s\in [t_0 - 1, t] \Big\} .
\end{aligned}
\end{equation}
Then we define the contact set 
\begin{equation}
	\Gamma_{- v} = \Big\{ (y , t) \in B_1 \times [t_0 - 1, t_0] \,\Big|\, \Phi (y,t) \neq \emptyset\Big\} .
\end{equation}
In $\Gamma_{- v}$, it must be true that $D^2_x v \geq 0$ and $\p_t v \leq 0$. Therefore,
\begin{equation}
\{u_{i\bar j}\} - \{\ul u_{i \bar j}\} \geq - \epsilon I,
\end{equation}
and 
\begin{equation}
	\frac{\chi^k_u \wedge \omega^{n - k}}{\chi^l_u \wedge \omega^{n - l}} \leq e^{\epsilon }\psi .
\end{equation}
If $\epsilon$ is chosen small enough,  we obtain an bound $|u_{i\bar j}| < C$ in $\Gamma_{-v}$.
By Alexandroff-Bakelman-Pucci maximum principle\cite{Tso85} for parabolic equations, we have 
\begin{equation}
\begin{aligned}
	\epsilon &\leq  C \left[\int_{ \Gamma_{- v} \cap \{v < 0\}} - \p_t v \det  (D^2_x v) dx dt \right]^{\frac{1}{2n + 1}} \\
	&\leq  C \left[\int_{ \Gamma_{- v} \cap \{v < 0\}} - \p_t v 2^{2 n} (\det (v_{i\bar j}))^2 dx dt \right]^{\frac{1}{2n + 1}} .
\end{aligned}
\end{equation}
Because of the boundedness of $u_{i\bar j}$ and $\p_t u$, it follows that 
\begin{equation}
	\epsilon \leq C \left|\Gamma_{-v} \cap \{v < 0\} \right|^\frac{1}{2n + 1} .
\end{equation}
When $v < 0$, 
\begin{equation}
	\hat u  < \ul u + \epsilon - \epsilon |z|^2 + \epsilon  (t - t_0) + \inf_M (\hat u - \ul u) (x , t_0) 
	< \inf_M (\hat u - \ul u) (x , t_0) .
\end{equation}
So
\begin{equation}
\begin{aligned}
	\epsilon^{2n + 1} &\leq  C \left|M \times [t_0 - 1 , t_0] \cap \Big\{\hat u  <  \inf_M (\hat u - \ul u) (x , t_0)\Big\} \right|  \\
	&\leq C \int^{t_0}_{t_0 - 1} \frac{||\hat u^- (x,t)||_{L^1}}{|\inf_M (\hat u - \ul u) (x , t_0)|} dt \\
	&\leq C \int^{t_0}_{t_0 - 1} \frac{||\hat u(x,t) - \sup_M \hat u(x,t)||_{L^1}}{|\inf_M (\hat u - \ul u) (x , t_0)|} dt
\end{aligned}
\end{equation}
Using the Green's function of $\omega$, we have a uniform bound for 
$||\hat u(x,t) - \sup_M \hat u(x,t)||_{L^1}$. Therefore, there is a uniform constant $C_0 \geq 0$ such that 
\begin{equation}
	\inf_{M \times [0,t_0]} (\hat u - \ul u) (x,t) > - C_0 ,
\end{equation}
which contradicts the definition of $t_0$.

\end{proof}

\section{The second order estimate}
\label{second}

In this section, we shall prove the second order estimates.

\begin{theorem}
\label{second-lemma-estimate}
There exists a constant $C$ depending on  $\sup_{M\times [0,T)} |\hat u | $ such that for any $t' \in [0,T)$, 
\begin{equation}
\label{second-lemma-inequality}
	\sup_M |\p\bpartial u| \leq C \Bigg(\sup_{M\times [0,t']} |\nabla u|^2 + 1\Bigg),
\end{equation}
at any time $t \in [0,t']$.
\end{theorem}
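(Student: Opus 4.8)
The plan is to prove the second order estimate via the maximum principle applied to a test quantity of the form
\begin{equation*}
	W = \log \lambda_{\max}(\nabla^2 u) + \varphi(|\nabla u|^2) + \phi(\hat u),
\end{equation*}
following the scheme of Hou, Ma and Wu \cite{HouMaWu10}, where $\lambda_{\max}(\nabla^2 u)$ denotes the largest eigenvalue of the complex Hessian $\{u_{i\bar j}\}$ (equivalently, since $\chi$ is bounded, the largest eigenvalue $X_{1\bar 1}$ of $X = \chi_u$ up to a controlled error), $\varphi$ is a suitable increasing concave-type function of the gradient that produces a good negative term, and $\phi$ is a function of the normalized potential $\hat u$ with large first derivative, chosen so that $\phi'$ dominates certain error terms coming from the $\mathcal{C}$-subsolution. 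Since $\hat u$ is uniformly bounded by Theorem~\ref{uniform-theorem}, $\phi(\hat u)$ and $\phi'(\hat u)$ are under control.

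First I would set up the computation at a point $(x_0,t_0)$ where $W$ attains its maximum on $M\times[0,t']$, choose a normal coordinate chart at $x_0$ diagonalizing $X$ with $X_{1\bar 1}\geq\cdots\geq X_{n\bar n}$, and handle the non-smoothness of $\lambda_{\max}$ by the standard perturbation trick (adding a constant symmetric matrix to split the top eigenvalue from the others). Then I would compute $\mathcal{L}W := \p_t W - \sum_i F^{i\bar i} W_{i\bar i}$ at that point, using the differentiated flow equations \eqref{gjf-int-derivative-time}, \eqref{gjf-int-derivative-1} and the second-order inequality for $\p_t u_{m\bar m}$ already recorded in the Preliminary section. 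The commutation formulas $X_{i\bar j j}-X_{j\bar j i}=\chi_{i\bar j j}-\chi_{j\bar j i}$ and the curvature identity for $X_{i\bar i j\bar j}-X_{j\bar j i\bar i}$ convert third derivatives of $u$ into third derivatives of $\chi$ plus curvature times $X_{1\bar 1}$, which are absorbable. The crucial positive "good" terms are the third-order terms $\sum_{i}F^{i\bar i}|X_{1\bar i 1}|^2/X_{1\bar 1}^2$ coming from differentiating $\log\lambda_{\max}$ twice, together with the concavity term $-\sum_{i\neq j}\big(\tfrac{S_{k-2;ij}}{S_k}-\tfrac{S_{l-2;ij}}{S_l}\big)|X_{i\bar j 1}|^2$; these must be played against the bad third-order terms produced by differentiating $F$ once (i.e. $|\nabla_1 \log(S_k/S_l)|^2$-type expressions) and by the cross terms with $\varphi(|\nabla u|^2)$.

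The key structural input is the dichotomy from Corollary~\ref{quotient-alternate-corollary}: either $\sum_i F^{i\bar i}(u_{i\bar i}-\ul u_{i\bar i})-\p_t u \leq -\theta(1+\sum_i F^{i\bar i})$, which gives a strong negative term after inserting $\phi(\hat u)$ with $\phi'$ large enough to dominate $\sum_i F^{i\bar i}\ul u_{i\bar i}$ and the constants; or $F^{1\bar 1}X_{1\bar 1}\geq\theta(1+\sum_i F^{i\bar i})$, in which case $F^{1\bar 1}$ is bounded below by $\theta/X_{1\bar 1}$ and one can run the Hou--Ma--Wu argument comparing the partial trace $\sum_i F^{i\bar i}X_{1\bar i\bar 1}$ estimates — this is precisely the point where "improving a key lemma in \cite{Sun2014e}" (now Lemma~\ref{quotient-alternate-lemma} and Corollary~\ref{quotient-alternate-corollary}) is used. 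In this second branch one splits the index sum into a "large" set $I=\{i:X_{i\bar i}<-\delta X_{1\bar 1}\}$ (bounded by the subsolution, giving $F^{i\bar i}$ large there) and its complement, distributes the bad gradient terms accordingly, and uses the Cauchy--Schwarz / concavity inequalities of Hou--Ma--Wu to absorb them into the good terms plus $\varphi'\cdot(\text{gradient})$ contributions.

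I expect the main obstacle to be exactly this bookkeeping in the second alternative of the dichotomy: ensuring that the bad third-order terms (which are only controlled after borrowing a fraction of the good term $\sum_i F^{i\bar i}|X_{1\bar i 1}|^2/X_{1\bar 1}^2$) are fully absorbed \emph{uniformly in $t$}, and that the leftover is dominated by the negative contribution of $\phi(\hat u)$ and the gradient-squared term on the right-hand side of \eqref{second-lemma-inequality}. The parabolic term $\p_t W$ at an interior-time maximum contributes $\p_t(\log X_{1\bar 1})\geq 0$ which helps; the genuinely delicate estimate is choosing $\varphi$ and the constant in $\phi$ so that, at the maximum point, either $X_{1\bar 1}$ is already bounded or the inequality $0\geq \mathcal{L}W$ forces $X_{1\bar 1}\leq C(\sup_{M\times[0,t']}|\nabla u|^2+1)$. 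Once $X_{1\bar 1}(x_0,t_0)$ is bounded in terms of the gradient, $W\leq C$ everywhere and the stated estimate follows for all $(x,t)$ with $t\le t'$.
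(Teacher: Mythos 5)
Your proposal follows essentially the same approach as the paper: the Hou--Ma--Wu test quantity built from $\log X_{1\bar 1}$, a gradient-dependent term $\varphi(|\nabla u|^2)$, and a potential-dependent term, the critical-point conditions at an interior-time maximum in a normal chart, and the dichotomy of Corollary~\ref{quotient-alternate-corollary} as the decisive structural input. The differences are cosmetic rather than substantive: the paper uses $\rho(\hat u - \underline u)$ in place of your $\phi(\hat u)$ so that the term $\rho'\sum_i F^{i\bar i}(u_{i\bar i}-\underline u_{i\bar i})$ matches the dichotomy directly, and the paper's primary split is on whether $X_{n\bar n}<-\delta X_{1\bar 1}$, with the index set $I=\{i:F^{i\bar i}>\delta^{-1}F^{1\bar 1}\}$ entering only in the second case and the dichotomy applied \emph{within} each case (you also have the sign roles of $\sum_i F^{i\bar i}|X_{1\bar 1 i}|^2/X_{1\bar 1}^2$ and the concavity term reversed: the former is the bad term to absorb, the latter is good).
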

\begin{proof}
Following the work of Hou, Ma and Wu\cite{HouMaWu10}, we define
\begin{equation}
	H (x ,\xi) = \log \Big(\sum_{i,j} X_{i\bar j} \xi^i \bar \xi^j\Big) + \varphi (|\nabla u|^2) + \rho (\hat u - \ul u)
\end{equation}
where
\begin{equation}
\begin{aligned}
	\varphi (s) &= - \frac{1}{2} \log \Big(1 - \frac{s}{2 K}\Big), \qquad \text{for } 0 \leq s \leq K - 1, \\
	\rho (t) &= - A \log \Big(1 + \frac{t}{2L}\Big), \qquad \text{for } - L + 1 \leq t \leq L - 1,
\end{aligned}
\end{equation}
with
\begin{equation*}
\begin{aligned}
	K &:= \sup_{M\times [0,t']} |\nabla u|^2 + \sup |\nabla \ul u|^2 + 1, \\
	L &:= \sup_{M\times [0,T)} |\hat u| + \sup_M | \ul u| + 1, \\
	A &:= 3 L (C_0 + 1)
\end{aligned}
\end{equation*}
and $C_0$ is to be specified later. Note that 
\begin{equation}
\label{second-test-function-varphi-1}
	\frac{1}{2K} \geq \varphi' \geq \frac{1}{4 K} > 0, \;\; \varphi'' = 2 (\varphi')^2 > 0
\end{equation}
and
\begin{equation}
\label{second-test-function-rho-1}
	\frac{A}{L} \geq - \rho' \geq \frac{A}{3 L} = C_0 + 1,\;\; \rho'' \geq \frac{2\epsilon}{1 - \epsilon} (\rho')^2,\;\; \text{ for all } \epsilon \leq \frac{1}{2 A + 1}.
\end{equation}
The function $H$ must achieves its maximum at some point $(p,t_0)$ in some unit direction of $\eta$. Around $p$, we choose a normal chart such that $X_{1\bar 1} \geq \cdots \geq X_{n\bar n}$, and $X_{1\bar 1} = X_{\eta\bar \eta}$ at $p$. 

Define
\begin{equation}
	H_t := \frac{\p_t u_{1\bar 1}}{X_{1\bar 1}} + \varphi' \p_t (|\nabla u|^2) + \rho' \p_t (\hat u - \ul u) ,
\end{equation}
\begin{equation}
	H_i := \frac{X_{1\bar 1 i}}{X_{1\bar 1}} + \varphi' \p_i (|\nabla u|^2) + \rho' (u_i - \ul u_i),
\end{equation}
and
\begin{equation}
\begin{aligned}
	H_{i\bar i} :=&\, \frac{X_{1\bar 1i\bar i}}{X_{1\bar 1}} - \frac{|X_{1\bar 1i}|^2}{X^2_{1\bar 1}} + \varphi'' |\p_i (|\nabla u|^2)|^2 + \varphi' \bpartial_i\p_i (|\nabla u|^2) \\
	&\, + \rho'' |u_i - \ul u_i|^2 + \rho' ( u_{i\bar i} - \ul u_{i\bar i}) .
\end{aligned}
\end{equation}
At $(p,t_0)$, we have $H_t \geq 0$, $H_i = 0$ and $H_{i\bar i} \leq 0$. Thus 
\begin{equation}
\label{second-proof-derivative-time}
	\frac{\p_t u_{1\bar 1}}{X_{1\bar 1}}  \geq - \varphi' \p_t (|\nabla u|^2) - \rho' \p_t u,
\end{equation}
\begin{equation}
\label{second-proof-derivative-1}
	\frac{X_{1\bar 1i}}{X_{1\bar 1}} = - \varphi' \p_i (|\nabla u|^2) - \rho' (u_i - \ul u_i) ,
\end{equation}
and
\begin{equation}
\label{second-proof-derivative-2}
\begin{aligned}
	0 \geq&\, \frac{1}{X_{1\bar 1}} (X_{i\bar i1\bar 1} + R_{i\bar i1\bar 1} X_{1\bar 1} - R_{1\bar 1i\bar i} X_{i\bar i} - G_{1\bar 1i\bar i}) - \frac{|X_{1\bar 1i}|^2}{X^2_{1\bar 1}} \\
	&\, + \varphi'' |\p_i (|\nabla u|^2)|^2 + \varphi' \bpartial_i\p_i (|\nabla u|^2) + \rho'' |u_i - \ul u_i|^2 + \rho' (u_{i\bar i} - \ul u_{i\bar i}) .
\end{aligned}
\end{equation}
Multiplying \eqref{second-proof-derivative-2} by $F^{i\bar i}$ and summing it over index $i$,
\begin{equation}
\label{second-proof-control-1}
\begin{aligned}
	0 \geq&\, \frac{1}{X_{1\bar 1}} \p_t u_{1\bar 1} + \frac{1}{X_{1\bar 1}} \bpartial_1\p_1 (\log \Psi) + \frac{1}{X_{1\bar 1}} \sum_{i \neq j} \Big(\frac{S_{k - 2;ij}}{S_k} - \frac{S_{l - 2;ij}}{S_l} \Big)X_{i\bar j1} X_{j\bar i\bar 1} \\
	&\, + \frac{(k - l) \inf_p R_{p\bar p 1\bar 1}}{X_{1\bar 1}}  + \inf_p R_{p\bar p 1\bar 1} \sum_i F^{i\bar i} - \frac{\sup_p  G_{1\bar 1p\bar p}}{X_{1\bar 1}} \sum_i F^{i\bar i} \\
	&\, - \frac{1}{X^2_{1\bar 1}} \sum_i F^{i\bar i} |X_{1\bar 1i}|^2  + \varphi'' \sum_i F^{i\bar i} |\p_i (|\nabla u|^2)|^2 + \varphi' \sum_i F^{i\bar i} \bpartial_i\p_i (|\nabla u|^2)  \\
	&\, + \rho'' F^{i\bar i} |u_i - \ul u_i|^2 + \rho' \sum_i F^{i\bar i} (u_{i\bar i} - \ul u_{i\bar i}).
\end{aligned}
\end{equation}
Substituting \eqref{second-proof-derivative-time} into \eqref{second-proof-control-1},
\begin{equation}
\label{second-proof-control-1-1}
\begin{aligned}
	0 \geq&\,\frac{1}{X_{1\bar 1}} \bpartial_1\p_1 (\log \Psi) + \frac{1}{X_{1\bar 1}} \sum_{i \neq j} \Big(\frac{S_{k - 2;ij}}{S_k} - \frac{S_{l - 2;ij}}{S_l} \Big)X_{i\bar j1} X_{j\bar i\bar 1} \\
	&\, + \frac{(k - l) \inf_p R_{p\bar p 1\bar 1}}{X_{1\bar 1}}  + \inf_p R_{p\bar p 1\bar 1} \sum_i F^{i\bar i }  - \frac{\sup_p  G_{1\bar 1p\bar p}}{X_{1\bar 1}} \sum_i F^{i\bar i}  \\
	&\, - \frac{1}{X^2_{1\bar 1}} \sum_i F^{i\bar i}  |X_{1\bar 1i}|^2 + \varphi'' \sum_i F^{i\bar i} |\p_i (|\nabla u|^2)|^2 + \varphi' \sum_i F^{i\bar i} \bpartial_i\p_i (|\nabla u|^2)  \\
	&\,    + \rho'' \sum_i F^{i\bar i} |u_i - \ul u_i|^2  + \rho' \sum_i F^{i\bar i} (u_{i\bar i} - \ul u_{i\bar i}) - \varphi' \p_t (|\nabla u|^2)  - \rho' \p_t u .
\end{aligned}
\end{equation}
Direct calculation shows that,
\begin{equation}
	\p_t (|\nabla u|^2) 
	= 2 \sum_j \mathfrak{Re} \{\p_t u_j u_{\bar j}\},
\end{equation}
\begin{equation}
	\p_i (|\nabla u|^2) = \sum_j (- u_j \chi_{i\bar  j} + u_{j i} u_{\bar j}) + u_i X_{i\bar i} ,
\end{equation}
and
\begin{equation}
\begin{aligned}
	\bpartial_i\p_i (|\nabla u|^2) 
	\geq&\, - 2 \sum_j \mathfrak{Re} \{\chi_{i\bar ij} u_{\bar j}\} + \sum_{j,k} R_{i\bar ij\bar k} u_k u_{\bar j} + \frac{1}{2} X^2_{i\bar i} \\
	&\, - 2 \chi^2_{i\bar i} + 2 \sum_j \mathfrak{Re}\{ X_{i\bar ij} u_{\bar j}\} .
\end{aligned}
\end{equation}
We control some terms in \eqref{second-proof-control-1},
\begin{equation}
\label{second-proof-control-2}
\begin{aligned}
	&\, \varphi' \sum_i F^{i\bar i} \bpartial_i\p_i (|\nabla u|^2) - \varphi' \p_t (|\nabla u|^2)\\
	\geq&\, 
	- 2 \varphi' \sum_{i,j} F^{i\bar i} \mathfrak{Re} \{\chi_{i\bar i j} u_{\bar j}\} + \varphi' \sum_{i,j,p} F^{i\bar i} R_{i\bar ij\bar p} u_p u_{\bar j} + \frac{\varphi'}{2} \sum_i F^{i\bar i} X^2_{i\bar i} \\
	&\, - 2 \varphi' \sum_i F^{i\bar i} \chi^2_{i\bar i} + 2 \varphi' \sum_{j} \mathfrak{Re} \{\p_j (\log\Psi) u_{\bar j}\},
\end{aligned}
\end{equation}
where
\begin{equation}
\label{second-proof-control-3}
\begin{aligned}
	& - 2 \varphi' \sum_{i,j} F^{i\bar i} \mathfrak{Re} \{\chi_{i\bar i j} u_{\bar j}\} + \varphi' \sum_{i,j,p} F^{i\bar i} R_{i\bar ij\bar p} u_p u_{\bar j}  + 2 \varphi' \sum_{j} \mathfrak{Re} \{\p_j (\log\Psi) u_{\bar j}\} \\
	&\geq - \frac{\sup_p (\sum_j |\chi_{p\bar pj}| )|\nabla u|}{K} \sum_{i} F^{i\bar i}  - \frac{\sup_{j,p,q} |R_{q\bar qj\bar p}| |\nabla u|^2}{2 K} \sum_{i} F^{i\bar i} \\
	&\qquad - \frac{|\nabla \Psi||\nabla u|}{K} .
\end{aligned}
\end{equation}
Combining  \eqref{second-proof-control-1}-\eqref{second-proof-control-3},
\begin{equation}
\label{second-proof-control-6}
\begin{aligned}
	0 \geq&\, - C_1 - C_2 \sum_i F^{i\bar i} + \frac{1}{X_{1\bar 1}} \sum_{i \neq j} \Big(\frac{S_{k - 2;ij}}{S_k} - \frac{S_{l - 2;ij}}{S_l} \Big)X_{i\bar j1} X_{j\bar i\bar 1} \\
	&\,  - \frac{1}{X^2_{1\bar 1}} \sum_i F^{i\bar i} |X_{1\bar 1i}|^2 + \varphi'' \sum_i F^{i\bar i} |\p_i (|\nabla u|^2)|^2  + \frac{\varphi'}{2} \sum_i F^{i\bar i} X^2_{i\bar i} \\
	&\,   + \rho'' \sum_i F^{i\bar i} |u_i - \ul u_i|^2 + \rho' \sum_i F^{i\bar i} (u_{i\bar i} - \ul u_{i\bar i}) - \rho' \p_t u.
\end{aligned}
\end{equation}

Now we can define 
\begin{equation}
\delta = \frac{1}{1 + 2A} = \frac{1}{1 + 6L (C_0 + 1)}
\end{equation} 
and
\begin{equation}
C_0 = \frac{C_1 + C_2 + 1}{\theta} + \frac{C_2 + 1}{\lambda} .
\end{equation}

{\bf Case 1.} $X_{n\bar n} < -\delta X_{1\bar 1}$. In this case, $X^2_{1\bar 1} \leq \frac{1}{\delta^2} X^2_{n\bar n}$ and we just need to bound $X^2_{n\bar n}$.

By \eqref{second-proof-derivative-1} and \eqref{second-test-function-varphi-1}, we have
\begin{equation}
\label{second-proof-control-8}
\begin{aligned}
	&\,\frac{1}{X^2_{1\bar 1}} \sum_i F^{i\bar i} |X_{1\bar 1i}|^2 \\
	=&\, \sum_i F^{i\bar i}  |\varphi' \p_i (|\nabla u|^2) + \rho' (u_i - \ul u_i)|^2 \\
	\leq&\, 2 (\varphi')^2 \sum_i F^{i\bar i}  | \p_i (|\nabla u|^2) |^2 +  2 (\rho')^2 \sum_i F^{i\bar i} |u_i - \ul u_i|^2 \\
	\leq&\, 
	\varphi'' \sum_i F^{i\bar i}  | \p_i (|\nabla u|^2) |^2  +  36 (C_0 + 1)^2 K \sum_i F^{i\bar i}.
\end{aligned}
\end{equation}
Substituting \eqref{second-proof-control-8} into \eqref{second-proof-control-6},
\begin{equation}
\label{second-proof-control-9}
\begin{aligned}
	&\,  C_1 + C_2 \sum_i F^{i\bar i}   
	+ 36 (C_0 + 1)^2 K \sum_i F^{i\bar i} \\
	\geq&\,   \frac{1}{8 K} \sum_i F^{i\bar i} X^2_{i\bar i} 
	+ \rho' \sum_i F^{i\bar i} (u_{i\bar i} - \ul u_{i\bar i}) - \rho' \p_t u.
\end{aligned}
\end{equation}

According to Lemma~\ref{quotient-alternate-corollary}, there are at most two possibilities. If \eqref{quotient-alternate-corollary-1} holds true,
\begin{equation}
\label{second-proof-control-10}
\begin{aligned}
	&\, C_1 + C_2 \sum_i F^{i\bar i} + 36 (C_0 + 1)^2 K \sum_i F^{i\bar i} \\
	\geq&\,  \frac{X^2_{n \bar n}}{8 n K}  \sum_i F^{i\bar i} + \theta (C_0 + 1) + \theta (C_0 + 1) \sum_i F^{i\bar i} .
\end{aligned}
\end{equation}
Then
\begin{equation}
\label{second-proof-bound-1}
	X_{1\bar 1} < \frac{ \sqrt{288 n} (C_0 + 1)}{\delta} K .
\end{equation}
If \eqref{quotient-alternate-corollary-2} holds true,
\begin{equation}
\label{second-proof-control-11}
\begin{aligned}
	&\, C_1 + C_2 \sum_i F^{i\bar i} 
	+ 36 (C_0 + 1)^2 K \sum_i F^{i\bar i} \\
	\geq&\,  \frac{\theta X_{1\bar 1}}{8 K}   + \frac{X^2_{n \bar n}}{8 n K}  \sum_i F^{i\bar i}  + \rho' \sum_i F^{i\bar i} (u_{i\bar i} - \ul u_{i\bar i}) - \rho' u_t.
\end{aligned}
\end{equation}
Since
\begin{equation}
\label{second-proof-control-7}
\begin{aligned}
	&\, \rho' \sum_i F^{i\bar i} (u_{i\bar i} - \ul u_{i\bar i}) - \rho' u_t \\
	\geq&\, \rho' \log \frac{S_k}{S_l} - \rho' u_t - \lambda \rho' \sum_i  F^{i\bar i} \\
	\geq&\, - 3 (C_0 + 1) \sup_M |\log \Psi| + \lambda (C_0 + 1) \sum_i  F^{i\bar i}  ,
\end{aligned}
\end{equation}
it follows that
\begin{equation}
\label{second-proof-control-12}
	 C_1 + 3 (C_0 + 1) \sup_M |\log \Psi|
	+ 36 (C_0 + 1)^2 K \sum_i F^{i\bar i} 
	\geq \frac{\theta  X_{1\bar 1}}{8 K}  + \frac{X^2_{n \bar n}}{8 n K}  \sum_i F^{i\bar i}  .
\end{equation}
Then we either have \eqref{second-proof-bound-1} or
\begin{equation}
\label{second-proof-bound-2}
	X_{1\bar 1} \leq \frac{8 (C_1 + 3(C_0 + 1) \sup_M |\log \Psi|)}{\theta} K .
\end{equation}

{\bf Case 2.} $X_{n\bar n} \geq -\delta X_{1\bar 1}$. Define 
\begin{equation}
I = \left\{i \in \{1 , \cdots, n\} \,\Big|\,F^{i\bar i} > \delta^{- 1} F^{1\bar 1} \right\}.
\end{equation}
Then
\begin{equation}
\label{second-proof-control-13}
\begin{aligned}
	&\, \frac{1}{X_{1\bar 1}} \sum_{i \neq j} \Big(\frac{S_{k - 2;ij}}{S_k} - \frac{S_{l - 2;ij}}{S_l} \Big)X_{i\bar j1} X_{j\bar i\bar 1} \\
	\geq&\, \frac{1 - \delta}{1 + \delta} \frac{1}{X^2_{1\bar 1}} \sum_{i \in I} F^{i\bar i} \big(|X_{1\bar 1i}|^2 + 2 \mathfrak{Re}\{X_{1\bar 1i} \bar b_i\}\big),
\end{aligned}
\end{equation}
where $b_i = \chi_{i\bar 1 1} - \chi_{1\bar 1i}$ . So we have
\begin{equation}
\label{second-proof-control-14}
\begin{aligned}
	&\, C_1 + C_2 \sum_i F^{i\bar i} + \frac{1}{X^2_{1\bar 1}} \sum_i F^{i\bar i}  |X_{1\bar 1i}|^2  \\
	\geq &\, \frac{1 - \delta}{1 + \delta} \frac{1}{X^2_{1\bar 1}} \sum_{i \in I} F^{i\bar i} \big(|X_{1\bar 1i}|^2 + 2 \mathfrak{Re}\{X_{1\bar 1i} \bar b_i\}\big) \\
	&\, + \varphi'' \sum_i F^{i\bar i} |\p_i (|\nabla u|^2)|^2  + \rho'' \sum_{i \in I} F^{i\bar i} |u_i - \ul u_i|^2  \\
	&\,   + \frac{1}{8 K} \sum_i F^{i\bar i} X^2_{i\bar i} + \rho' \sum_i F^{i\bar i} (u_{i\bar i} - \ul u_{i\bar i}) - \rho' u_t.
\end{aligned}
\end{equation}
We need to control the terms in \eqref{second-proof-control-14}. By \eqref{second-proof-derivative-1} and the fact that $\varphi'' = 2 (\varphi')^2$,
\begin{equation}
\label{second-proof-control-15}
\begin{aligned}
	\varphi'' \sum_{i \in I} F^{i\bar i} |\p_i (|\nabla u|^2)|^2 
	\geq 2 \sum_{i \in I} F^{i\bar i} \Big(\delta \Big|\frac{X_{1\bar 1i}}{X_{1\bar 1}}\Big|^2 - \frac{\delta}{1 - \delta} |\rho' (u_i - \ul u_i)|^2\Big) ,
\end{aligned}
\end{equation}
and in addition using the fact that $\rho'' \geq \frac{2 \delta}{1 - \delta} (\rho')^2$ and Schwarz inequality,
\begin{equation}
\label{second-proof-control-16}
\begin{aligned}
	&\,  \frac{1 - \delta}{1 + \delta} \frac{1}{X^2_{1\bar 1}} \sum_{i \in I} F^{i\bar i} \Big(|X_{1\bar 1i}|^2 + 2 \mathfrak{Re}\{X_{1\bar 1i} \bar b_i\}\Big)  - \frac{1}{X^2_{1\bar 1}} \sum_{i \in I} F^{i\bar i} |X_{1\bar 1i}|^2 \\
	&\,   + \rho'' \sum_{i \in I} F^{i\bar i} |u_i - \ul u_i|^2  + \varphi'' \sum_{i \in I} F^{i\bar i} |\p_i (|\nabla u|^2)|^2 \\
	\geq 
	&\,\frac{1 - \delta}{1 + \delta} \frac{1}{X^2_{1\bar 1}} \sum_{i \in I} F^{i\bar i} \Big(|X_{1\bar 1i}|^2 + 2 \mathfrak{Re}\{X_{1\bar 1i} \bar b_i\}\Big) - \frac{1}{X^2_{1\bar 1}} \sum_{i \in I} F^{i\bar i}  |X_{1\bar 1i}|^2  \\
	&\,   + \rho'' \sum_{i \in I} F^{i\bar i} |u_i - \ul u_i|^2 + 2 \sum_{i \in I} F^{i\bar i} \Big(\delta \Big|\frac{X_{1\bar 1i}}{X_{1\bar 1}}\Big|^2 - \frac{\delta}{1 - \delta} |\rho' (u_i - \ul u_i)|^2\Big) \\
	\geq&\, \frac{2 \delta^2}{1 + \delta} \frac{1}{X^2_{1\bar 1}} \sum_{i \in I} F^{i\bar i} |X_{1\bar 1i}|^2 + \frac{2 (1 - \delta)}{1 + \delta} \frac{1}{X^2_{1\bar 1}} \sum_{i \in I} F^{i\bar i}  \mathfrak{Re}\{X_{1\bar 1i} \bar b_i\} \\
	\geq&\, \frac{\delta^2}{X^2_{1\bar 1}} \sum_{i \in I} F^{i\bar i} |X_{1\bar 1i}|^2 - \frac{1 - \delta}{(1 + \delta) \delta^2} \frac{\sum_p  |b_p|^2}{X^2_{1\bar 1}} \sum_{i\in I} F^{i\bar i}.
\end{aligned}
\end{equation}
For the terms without index in $I$, by \eqref{second-proof-control-8}
\begin{equation}
\label{second-proof-control-17}
\begin{aligned}
&\,	\varphi'' \sum_{i\notin I}	 F^{i\bar i} |\p_i (|\nabla u|^2)|^2  - \frac{1}{X^2_{1\bar 1}} \sum_{i\notin I} F^{i\bar i} |X_{1\bar 1i}|^2 \\
	\geq&\, - 36 (C_0 + 1)^2 K \max_{i \notin I}  F^{i\bar i} \\
	\geq&\, - \frac{36 (C_0 + 1)^2 K}{\delta} F^{1\bar 1} .
\end{aligned}
\end{equation}
We may assume that
\begin{equation}
	X^2_{1\bar 1} \geq \frac{ (1 - \delta)}{ ( 1+ \delta) \delta^2} \sum_p |b_p|^2,
\end{equation}
otherwise, the $C^2$ bound is achieved.
Substituting \eqref{second-proof-control-16} and \eqref{second-proof-control-17} into \eqref{second-proof-control-14},
\begin{equation}
\label{second-proof-control-18}
\begin{aligned}
	&\,   C_1 + (C_2 + 1) \sum_i F^{i\bar i} + \frac{36 (C_0 + 1)^2 K}{\delta} F^{1\bar 1} \\
	\geq&\, \frac{1}{8 K} \sum_i F^{i\bar i} X^2_{i\bar i}   + \rho' \sum_i F^{i\bar i} (u_{i\bar i} - \ul u_{i\bar i}) - \rho' u_t .
\end{aligned}
\end{equation}
If \eqref{quotient-alternate-corollary-1} holds true,
\begin{equation}
\label{second-proof-control-19}
\begin{aligned}
	&\,   C_1 + (C_2 + 1) \sum_i F^{i\bar i} + \frac{36 (C_0 + 1)^2 K}{\delta} F^{1\bar 1} \\
	\geq&\, \frac{1}{8 K} \sum_i F^{i\bar i} X^2_{i\bar i}   + \theta (C_0 + 1) + \theta (C_0 + 1) \sum_i F^{i\bar i}.
\end{aligned}
\end{equation}
Then
\begin{equation}
\label{second-proof-control-20}
	\frac{36 (C_0 + 1)^2 K}{\delta} F^{1\bar 1} \geq \frac{1}{8 K} \sum_i F^{i\bar i} X^2_{i\bar i} .
\end{equation}
So we have
\begin{equation}
\label{second-proof-bound-3}
	X_{1\bar 1} \leq \sqrt{\frac{288}{\delta}} (C_0 + 1) K .
\end{equation}
If \eqref{quotient-alternate-corollary-2} holds true,
\begin{equation}
\label{second-proof-control-21}
\begin{aligned}
	&\,   C_1 + (C_2 + 1) \sum_i F^{i\bar i}  + \frac{36 (C_0 + 1)^2 K}{\delta} F^{1\bar 1}\\
	\geq&\, \frac{1}{8 K} \sum_i F^{i\bar i} X^2_{i\bar i}   - 3 (C_0 + 1) \sup_M |\log \Psi| + \lambda (C_0 + 1) \sum_i  F^{i\bar i} .
\end{aligned}
\end{equation}
Then
\begin{equation}
\label{second-proof-control-22}
	 C_1 + 3 (C_0 + 1) \sup_M |\log \Psi| + \frac{36 (C_0 + 1)^2 K}{\delta} F^{1\bar 1}
	\geq \frac{\theta X_{1\bar 1}}{16 K} + \frac{1}{16 K} F^{i\bar i} X^2_{1\bar 1}    .
\end{equation}
So we have either
\begin{equation}
\label{second-proof-bound-4}
	X_{1\bar 1} \leq \frac{16(C_1 +  3 (C_0 + 1) \sup_M |\log \Psi|)}{\theta} K
\end{equation}
or
\begin{equation}
\label{second-proof-bound-5}
	X_{1\bar 1} \leq \frac{24 (C_0 + 1)}{\sqrt{\delta}} K.
\end{equation}

\end{proof}

\section{The gradient estimate}
\label{gradient}

In this section, we shall adapt the blow up argument of Dinew and Kolodziej\cite{DK} and Gill\cite{Gill14}.

\begin{theorem}
\label{gradient-theorem-1}
On the maximal time interval $[0,T)$, there is a  uniform constant $C > 0$ such that
\begin{equation}
\label{gradient-theorem-1-inequality}
	\sup_{M \times [0,T)} |\nabla u| \leq C .
\end{equation}
\end{theorem}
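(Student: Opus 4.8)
The plan is to prove Theorem~\ref{gradient-theorem-1} by contradiction via a blow-up argument, following Dinew--Kolodziej~\cite{DK} and Gill~\cite{Gill14}. Suppose $\sup_{M\times[0,T)}|\nabla u|$ is not bounded; then there exist points $x_j\in M$ and times $t_j\in[0,T)$ with $C_j:=|\nabla u|(x_j,t_j)\to\infty$. Work in a fixed holomorphic coordinate chart near the limit point (after passing to a subsequence $x_j\to x_\infty$), identified with a ball in $\mathbb C^n$, and rescale: set $w_j(z,s) = u\big(x_j + C_j^{-1}z,\ t_j + C_j^{-2}s\big)$, so that $|\nabla w_j|(0,0)=1$ and, by Theorem~\ref{second-lemma-estimate} together with the $L^\infty$ bound of Theorem~\ref{uniform-theorem}, one has $|\nabla w_j|\le C$ and $|\partial\bpartial w_j|\le C C_j^{-2}\big(\sup|\nabla u|^2+1\big)$ on compact sets; the key point is that after rescaling the complex Hessian of $w_j$ tends to zero. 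Meanwhile $w_j - w_j(0,\cdot)$ is bounded in $C^{1,1}_{loc}$ (in fact the $L^\infty$ oscillation bound plus the gradient bound control it), so along a subsequence $w_j - w_j(0,0)$ converges in $C^{1,\alpha}_{loc}$ to a function $w_\infty$ on $\mathbb C^n$ (or on $\mathbb C\times\mathbb R$ if the time-rescaling collapses the spatial directions — but here the parabolic scaling $C_j^{-2}$ on time is exactly matched so one gets a function on all of $\mathbb C^n$, static).

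The next step is to identify the limit equation. After rescaling, $\chi + \frac{\sqrt{-1}}{2}\partial\bpartial u$ becomes $C_j^{-2}\chi(x_j+C_j^{-1}z) + \frac{\sqrt{-1}}{2}\partial\bpartial w_j$, and $\partial_t u$ becomes $C_j^2\,\partial_s w_j$; both $\partial_t u$ (bounded, by the maximum-principle argument on \eqref{gjf-int-derivative-time}) and the background term $C_j^{-2}\chi$ vanish in the limit. One must check that $\frac{\sqrt{-1}}{2}\partial\bpartial w_\infty \ge 0$ (the rescaled forms stay in $\Gamma^k_\omega$, hence are in particular positive semidefinite, and $\partial\bpartial w_j \to \partial\bpartial w_\infty$ weakly), so $w_\infty$ is a bounded plurisubharmonic function on $\mathbb C^n$. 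By Liouville's theorem for psh functions (a bounded psh function on $\mathbb C^n$ is constant), $w_\infty$ is constant, contradicting $|\nabla w_\infty|(0,0)=\lim|\nabla w_j|(0,0)=1$.

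Actually the cleaner route, which is what Dinew--Kolodziej and Gill use, avoids needing more than semipositivity: one shows the rescaled limit $w_\infty$ is a bounded \emph{maximal} psh function (equivalently solves $(\partial\bpartial w_\infty)^n=0$) on $\mathbb C^n$ and still is forced to be constant, but even the crude statement that $w_\infty$ is bounded psh on $\mathbb C^n$ already suffices. So the argument reduces to: (i) uniform $C^{1,1}$ bounds on the rescalings from Theorems~\ref{uniform-theorem} and \ref{second-lemma-estimate}, plus the time-independent bound $\sup_{M\times[0,T)}|\partial_t u|\le \sup_{M\times\{0\}}|\partial_t u|$; (ii) Arzel\`a--Ascoli to extract a $C^{1,\alpha}_{loc}$ limit $w_\infty$ on $\mathbb C^n$; (iii) $w_\infty$ bounded and psh; (iv) Liouville, giving a contradiction with $|\nabla w_\infty(0)|=1$.

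The main obstacle is step (i)--(ii): one must ensure the rescaled functions genuinely converge on \emph{all} of $\mathbb C^n$ and not just on a shrinking region — this requires the $L^\infty$ oscillation bound of Theorem~\ref{uniform-theorem} to hold uniformly in $t\in[0,T)$ (which it does, being independent of $T$), and requires Theorem~\ref{second-lemma-estimate}'s Hessian bound in the form $\sup|\partial\bpartial u|\le C(\sup|\nabla u|^2+1)$ so that after dividing by $C_j^2$ the Hessian of $w_j$ is $O(1)$ on fixed balls and indeed $\to 0$ once one knows the blow-up rate is exactly $C_j$. A subtle point to verify carefully is that the blow-up is at the natural rate, i.e. that $C_j^{-2}\sup_{B}|\partial\bpartial u| \to 0$ rather than merely staying bounded; this follows because $\sup|\nabla u|$ over $M\times[0,t_j]$ is comparable to $C_j^2$ only if it blows up, and a standard maximal-point selection (choosing $(x_j,t_j)$ to nearly maximize $|\nabla u|$ on $M\times[0,t_j]$ for suitable $t_j$) forces the gradient of $w_j$ to remain bounded on larger and larger balls while its Hessian is forced to $0$. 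Apart from this bookkeeping, the rest is the routine normal-families and Liouville argument.
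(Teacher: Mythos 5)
Your overall framework — contradiction via blow-up at the gradient scale, rescaling, extracting a $C^{1,\alpha}_{\mathrm{loc}}$ limit on $\mathbb{C}^n$, then appealing to a Liouville-type theorem — is the same as the paper's, which follows Dinew--Kolodziej and Gill. But two of your key identifications are wrong, and they matter precisely when $l>0$ or $k<n$, which is the generic case here.

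First, you assert that the rescaled forms are positive semidefinite because they lie in $\Gamma^k_\omega$, and hence that the limit $w_\infty$ is plurisubharmonic. This is false for $k<n$: the cone $\Gamma^k$ only requires $S_1,\dots,S_k$ to be positive, and allows negative eigenvalues (e.g.\ $(5,5,-1)$ is in $\Gamma^2\subset\mathbb{R}^3$). So $w_\infty$ is only $k$-sh, not psh, and the classical Liouville theorem for bounded psh functions does not apply. Relatedly, your ``equivalently solves $(\p\bpartial w_\infty)^n=0$'' is the Monge--Amp\`ere condition, not what the rescaled flow gives. What the paper actually derives (and what you need) is
\begin{equation*}
\Big[\tfrac{\sqrt{-1}}{2}\p\bpartial \tilde u\Big]^k\wedge\beta^{n-k}=0
\qquad\text{and}\qquad
\Big[\tfrac{\sqrt{-1}}{2}\p\bpartial \tilde u\Big]^p\wedge\beta^{n-p}\ge 0\quad(1\le p\le k),
\end{equation*}
in the pluripotential sense, i.e.\ $\tilde u$ is a \emph{maximal $k$-sh} function. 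The contradiction with $|\nabla\tilde u(0)|=1$ then comes from the Dinew--Kolodziej Liouville theorem for maximal $k$-sh functions with bounded gradient, not from the psh Liouville theorem. Your ``crude statement'' shortcut therefore does not suffice.

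Second, your ``subtle point'' about the blow-up rate is a red herring: you do not need $C_j^{-2}\sup|\p\bpartial u|\to 0$. Theorem~\ref{second-lemma-estimate} gives $\sup|\p\bpartial u|\le C(\sup|\nabla u|^2+1)$, so after dividing by $C_j^2$ the rescaled Hessian is merely \emph{bounded}, which is exactly what you want: boundedness gives $C^{1,\alpha}_{\mathrm{loc}}$ compactness, and it is the vanishing of the rescaled $k$-Hessian operator — driven by the factor $C_j^{2(k-l)}$ coming from the ratio $S_k/S_l$ combined with the boundedness of $\p_t u$ — that forces the limit to satisfy $[\frac{\sqrt{-1}}{2}\p\bpartial\tilde u]^k\wedge\beta^{n-k}=0$. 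If $\p\bpartial w_j\to 0$ you would not even have a nontrivial limit operator; the argument would collapse. (Your parabolic time-rescaling is harmless but unnecessary: since $\p_t u$ is uniformly bounded by the maximum principle applied to \eqref{gjf-int-derivative-time}, the paper simply freezes $t=t_m$ and rescales in space only; the time derivative term is absorbed as a bounded multiplicative factor $e^{\p_t u}$ and disappears after dividing by $C_m^{2(k-l)}$.)
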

\begin{proof}

The argument is very similar to those of Dinew and Kolodziej\cite{DK} and Gill\cite{Gill14}, so we just give a brief statement here.

We shall prove the theorem by contradiction and suppose that the gradient estimate~\eqref{gradient-theorem-1-inequality} does not hold. Then there exists a sequence $(x_m, t_m) \in M \times [0,T)$ with $t_m \rightarrow T$ such that $\lim_{m\rightarrow \infty} |\nabla u (x_m, t_m)| \rightarrow \infty$ and $|\nabla u(x_m,t_m)| = \sup_{M\times[0,t_m]} |\nabla u|$. We set $C_m := |\nabla u(x_m,t_m)| $.

After passing to a subsequence, we may assume that $x_m \rightarrow x \in M$. Fix a normal coordinate chart around $x$, which we identify with an open set in $\mathbb{C}^n$ with coordinates $(z^1 , \cdots , z^n)$, and such that $\omega (0) = \beta := \sum_{i,j} \delta_{ij} dz^i \wedge d\bar z^j$. Without loss of generality, we may assume that the open set contains $\ol {B_1 (0)}$. We define, on the ball $\ol{B_{C_m} (0)}$ in $\mathbb{C}^n$,
\begin{equation}
	\tilde u_m (z) : = u_m \Big(\frac{z}{C_m}\Big) .
\end{equation}
By passing to a subsequence again, we can find a limit function $\tilde u \in C^{1,\alpha} (\mathbb{C}^n)$. As show in \cite{DK}, it is sufficient to prove that $\tilde u$ is a maximal $k-sh$ function.
Without loss of generality, we may assume that $\tilde u_m$ is $C^{1,\alpha}$ convergent to $\tilde u$. Then we have
\begin{equation}
\begin{aligned}
	&\,\left[\chi_u \Big(\frac{z}{C_m}\Big) \right]^k \wedge \left[\omega \Big(\frac{z}{C_m}\Big) \right]^{n - k} \\
	=&\, e^{ \p_t u}\psi_m  \Big(\frac{z}{C_m}\Big) \left[\chi_u \Big(\frac{z}{C_m}\Big)  \right]^l \wedge \left[\omega \Big(\frac{z}{C_m}\Big) \right]^{n - l} .
\end{aligned}
\end{equation}
Fixing $z$, we have
\begin{equation}
\begin{aligned}
	&C^{2(k - l)}_m \left[ O\Big(\frac{1}{C^2_m}\Big) \beta + \frac{\sqrt{- 1}}{2} \p \bpartial\tilde  u_m (z) \right]^k \wedge \left[\left(1 + O \Big(\frac{|z|^2}{C^2_m}\Big)\right) \beta \right]^{n - k} \\
	=& e^{\p_t u}\psi_m \Big(\frac{z}{C_m}\Big) \left[ O\Big(\frac{1}{C^2_m}\Big) \beta + \frac{\sqrt{- 1}}{2} \p \bpartial\tilde  u_m (z) \right]^l \wedge \left[ \left(1 + O \Big(\frac{|z|^2}{C^2_m}\Big)\right) \beta \right]^{n - l} .
\end{aligned}
\end{equation}
Since $\p_t u$ is bounded, 
\begin{equation}
\label{gradient-eq-6}
	\left[\frac{\sqrt{-1}}{2}\p \bpartial \tilde u (z) \right]^k \wedge \beta^{n - k} = 0 ,
\end{equation}
which is in the pluripotential sense.
Moreover, a similar reasoning tells us that for any $1\leq p \leq k$,
\begin{equation}
\label{gradient-eq-7}
	\left[\frac{\sqrt{-1}}{2}\p \bpartial \tilde u (z) \right]^p \wedge \beta^{n - p} \geq 0 .
\end{equation}
By a result of Blocki\cite{Blocki05}, the above \eqref{gradient-eq-6} and \eqref{gradient-eq-7} imply that $\tilde u$ is a maximal $k - sh$ function in $\mathbb{C}^n$. 

\end{proof}

\section{Long time existence and Convergence}
\label{convergence}

If $T > 0$ is a real number, Theorem~\ref{gradient-theorem-1} implies that there is a time-independent gradient estimate on $[0,T)$. By the Evans-Krylov theorem and Schauder estimates, we can obtain $C^\infty$ estimates on $[0,T)$. Then standard procedure based on implicit function theorem can extend $u(x,t)$ to $[0,T+\epsilon)$ for some small $\epsilon > 0$, which contradicts the definition of $T$. So, $T$ must be $\infty$.

Applying Theorem~\ref{gradient-theorem-1}, the Evans-Krylov theorem and Schauder estimates again, we obtain $C^\infty$ estimates on $[0,\infty)$. 
Now we are able to show the convergence of the solution flow.
The arguments of Gill\cite{Gill11} following Cao\cite{Cao85} can be applied verbatim here, and thus the proof is omitted. 

From the arguments, 
\begin{equation}
	\sup_{x\in M} \frac{\p u}{\p t} (x,t) - \inf_{x\in M} \frac{\p u}{\p t} (x,t) \leq C e^{- c_0 t} ,
\end{equation}
for some $c_0 > 0$. Noticing that 
\begin{equation}
	\int_M \frac{\p \hat u}{\p t} \chi^l_u \wedge \omega^{n - l} = 0,
\end{equation}
for any fixed $t$ there must be $y \in M$ such that $\p_t \hat u (y,t) = 0$. Therefore
\begin{equation}
\label{convergence:solution-normalization-decay}
\begin{aligned}
	\left|\frac{ \p \hat u (x,t)}{\p t}\right| &= \left|\frac{ \p \hat u (x,t)}{\p t} - \frac{ \p \hat u (y,t)}{\p t}\right| \\
	&\leq \sup_{x\in M} \frac{\p u}{\p t} (x,t) - \inf_{x\in M} \frac{\p u}{\p t} (x,t) \leq C e^{- c_0 t}  ,
\end{aligned}
\end{equation}
and thus
\begin{equation}
\label{convergence:solution-normalization-decay-1}
	\frac{\p }{\p t} \left(\hat u + \frac{C}{c_0} e^{- c_0 t}\right) \leq 0 .
\end{equation}
By the $L^\infty$ estimate, it is easy to see that $\hat u + \frac{C}{c_0} e^{- c_0 t}$ is bounded.
By a standard argument, \eqref{convergence:solution-normalization-decay-1} implies that $\hat u$ is $C^\infty$ convergent to a smooth function $\hat u_\infty$.

Rewriting equation~\eqref{pfcqe-int-flow-equation},
\begin{equation}
\label{convergence:solution-normalization-flow}
	\frac{\p \hat u}{\p t} + \frac{\p}{\p t} \frac{J_l (u)}{\int_M \chi^l \wedge \omega^{n - l}} = \log \frac{\chi^k_{\hat u} \wedge \omega^{n - k}}{ \chi^l_{\hat u} \wedge \omega^{n - l} } - \log \psi.
\end{equation}
Letting $t\rightarrow\infty$,  \eqref{convergence:solution-normalization-decay} implies that \eqref{convergence:solution-normalization-flow} converges to a constant $b$.

\noindent
{\bf Acknowledgements}\quad
The author is very grateful to Bo Guan for his encouragement and helpful conversations. 



\end{document}